\numberwithin{equation}{section}	
\theoremstyle{plain}             
\newtheorem{theorem}{Theorem}[section]
\newtheorem{definition}[theorem]{Definition}
\newtheorem{lemma}[theorem]{Lemma}
\newtheorem{proposition}[theorem]{Proposition}
\theoremstyle{definition}
\newtheorem{example}[theorem]{Example}
\newtheorem{remark}[theorem]{Remark}
\def\calC{{{\cal C}}}
\def\sign{{\rm sign}}
\def\dsp{\displaystyle}
\def\eps{{\varepsilon}}
\def\RR{\mathbb R}
\begin{document}

\title{A distribution function from population genetics statistics using Stirling numbers of the first kind: Asymptotics, inversion  and numerical evaluation}

\author{
Swaine L. Chen\footnote{
Infectious Diseases Translational Research Programme and Department of Medicine, Division of Infectious Diseases, Yong Loo Lin School of Medicine, National University of Singapore, Singapore 119228, Singapore \&
Laboratory of Bacterial Genomics, Genome Institute of Singapore,
Singapore 138672, Singapore. Email: slchen@gis.a-star.edu.sg
}
\and
Nico M. Temme\footnote{IAA, 1825 BD 25, Alkmaar, The Netherlands. Former address: Centrum Wiskunde \& Informatica (CWI), Science Park 123, 1098 XG Amsterdam,  The Netherlands. Email: nico.temme@cwi.nl}
}


\maketitle
\begin{abstract}
Stirling numbers of the first kind are common in number theory and combinatorics; through Ewen’s sampling formula, these numbers enter into the calculation of several population genetics statistics, such as Fu's $F_s$. In previous papers we have considered an asymptotic estimator for a finite sum of Stirling numbers, which enables rapid and accurate calculation of Fu's $F_s$. These sums can also be viewed as a cumulative distribution function; this formulation leads directly to an inversion problem, where, given a value for Fu's $F_s$, the goal is to solve for one of the input parameters. We solve this inversion using Newton iteration for small parameters. For large parameters we need to  extend the earlier obtained asymptotic results to handle the inversion problem asymptotically. Numerical experiments are given to show the efficiency of both solving the inversion problem and  the expanded estimator for the statistical quantities.
\end{abstract}

{\small
\noindent
{\bf Keywords} Stirling numbers of the first kind;  Asymptotic analysis; Population genetics statistics; Evolutionary
inference from sequence alignments; Numerical algorithms; Cumulative distribution function.
}

\section{Introduction}\label{sec:intro}
In  recent papers \cite{Chen:2019:ISN}  and \cite{Chen:2020:FMA}  we have discussed the sum
\begin{equation}\label{eq:intro01}
S^\prime_{n,m}(\theta)=\frac{1}{(\theta)_n}\sum_{k= m}^n(-1)^{n-k}S_n^{(k)}\theta^k,\quad \theta>0,
\end{equation}
where 
 $S_n^{(k)}$ are the  Stirling numbers of the first kind defined by
\begin{equation}\label{eq:intro02}
(\theta)_n=\sum_{k= 0}^n(-1)^{n-k}S_n^{(k)}\theta^k,
\end{equation}
and $(\theta)_n$ is the Pochhammer symbol, defined by
\begin{equation}\label{eq:intro03}
(\theta)_0=1,\quad (\theta)_n=\theta(\theta+1)\cdots(\theta+n-1)=\frac{\Gamma(\theta+n)}{\Gamma(\theta)}.
\end{equation}

The quantity $S^\prime_{n,m}(\theta)$ (and related quantities) is used in the calculation of several population genetics statistics. One such statistic is Fu's $F_s$,
\begin{equation}\label{eq:intro04}
F_s=\ln\frac{S^\prime_{n,m}(\theta)}{1-S^\prime_{n,m}(\theta)},
\end{equation}
which was shown to be capable of identifying the genetic changes responsible for the increased fitness of a recently expanded clone of Campylobacter jejuni that is causing an epidemic of abortion in livestock \cite{pmid27601641}. We used asymptotic approximations of the Stirling numbers derived in  \cite{Temme:1993:AES} to compute $S^\prime_{n,m}(\theta)$ \cite{Chen:2019:ISN}. Subsequently, we transformed the sum into a contour integral in the complex plane, and we gave a first-order approximation of this integral  for large $n$, with $0< m< n$ and $\theta>0$ \cite{Chen:2020:FMA}.

Note that, because of \eqref{eq:intro02} the sum \eqref{eq:intro01} satisfies $0\le S^\prime_{n,m}(\theta)\le1$; see Figure~\ref{fig:fig01}. In fact $S^\prime_{n,m}(\theta)$ can be viewed as a cumulative distribution function  frequently used in the derivation of several population genetics statistics, which in turn are useful for testing evolutionary hypotheses directly from DNA sequences. 

In the earlier paper \cite{Chen:2020:FMA}  we have derived a new integral representation of  $S^\prime_{n,m}(\theta)$ and we have given a first-order asymptotic approximation in terms of an incomplete beta function. With these results the algorithm given  in a first attempt \cite{Chen:2019:ISN} could be considerably improved in efficiency and speed.

In  the present paper the main interest is the inversion problem to find $\theta$ from the equation  $S^\prime_{n,m}(\theta)=s$ with given $s\in(0,1)$, $n$ and $m$. For small or intermediate values of $n$ we use a Newton iteration scheme, whereas for large $n$  we derive more details on the earlier derived asymptotic representation of $S^\prime_{n,m}(\theta)$  to develop an asymptotic expansion of the wanted $\theta$. Numerical experiments are given to show the efficiency of the asymptotic expansion, of the Newton iterations, and the asymptotic inversion problem.

 \begin{figure}[tb]
\vspace*{0.8cm}
\begin{center}
\begin{minipage}{5cm}
    \includegraphics[width=4.5cm]{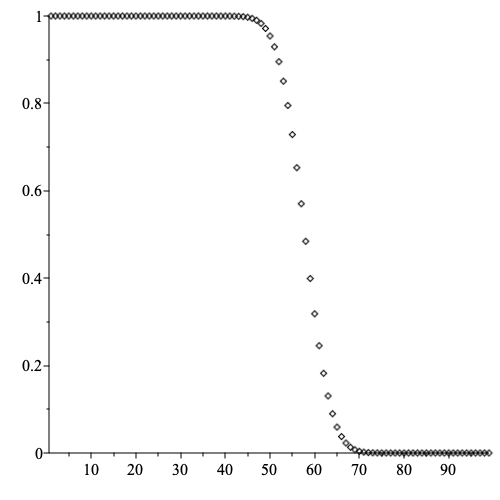} 
\end{minipage}
\hspace*{1cm}
\begin{minipage}{5cm}
    \includegraphics[width=4.5cm]{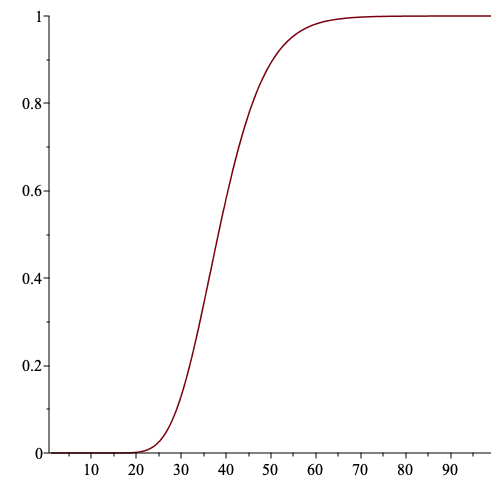} 
\end{minipage}
\end{center}
\caption{\small
{\bf Left:} The function $S^\prime_{n,m}(\theta)$  for $m=0,1,2,\ldots, 100$; $\theta=50$ and $n=100$.
{\bf Right:} The function $S^\prime_{n,m}(\theta)$  for $\theta\in[0,100]$; $m=50$ and $n=100$.
}
\label{fig:fig01}
\end{figure}

\section{A few details on the Stirling numbers}\label{sec:details}
For a concise overview of properties, of these Stirling numbers, with a summary of their uniform approximations, see \cite[\S11.3]{Gil:2007:NSF}.

In Figure~\ref{fig:fig01} we show two graphs of $S^\prime_{n,m}(\theta)$, on the left left a point plot for $0\le m\le n$, with fixed $\theta=50$ and $n=100$, and on the  right a smooth sigmoid curve for $0\le\theta\le n$ with fixed $m=50$ and $n=100$.

A key representations for the considered asymptotic problem is the Cauchy-type integral in the complex plane
\begin{equation}\label{eq:details01}
(-1)^{n-k}S_n^{(k)}=\frac{1}{2\pi i}\int_{\calC}\frac{(z)_n}{z^{k+1}}\,dz,
\end{equation}
which follows from \eqref{eq:intro02}. Here $\calC$ is a contour around the origin. 

Special values are
\begin{equation}\label{eq:details02}
S_n^{(n)}=1 \  (n\ge0), \quad S_n^{(0)}=0\ (n\ge1), \quad S_n^{(1)}=(-1)^{n-1}(n-1)! \ (n\ge1),
\end{equation}
and there is a recurrence relation:
\begin{equation}\label{eq:details03}
S_{n+1}^{(k)}=S_{n}^{(k-1)}-nS_{n}^{(k)}.
\end{equation}
For the sums  $S^\prime_{n,m}(\theta)$ we have a new similar result.

\begin{theorem}\label{thm:stel01}
The sums $S^\prime_{n,m}(\theta)$ satisfy for $n=2,3,4,\ldots$
 the recursion
 \begin{equation}\label{eq:details04}
\begin{array}{r@{\,}c@{\,}l}
(\theta+n)S^\prime_{n+1,m}(\theta)&=&nS^\prime_{n,m}(\theta)+\theta S^\prime_{n,m-1}(\theta),\quad 1\le m\le n,\\[8pt]
S^\prime_{n+1,n+1}(\theta)&=&\dsp{\frac{\theta^{n+1}}{(\theta)_{n+1}},}
\end{array}
\end{equation}
with initial values
\begin{equation}\label{eq:details05}
\begin{array}{r@{\,}c@{\,}l}
S^\prime_{0,0}(\theta)&=&1,\quad S^\prime_{1,0}(\theta)=1,\quad S^\prime_{1,1}(\theta)=1,\\[8pt]
S^\prime_{2,0}(\theta)&=&1,\quad S^\prime_{2,1}(\theta)=1,\quad \dsp{S^\prime_{2,2}(\theta)=\frac{\theta}{\theta+1}.}\\[8pt]
\end{array}
\end{equation}
\end{theorem}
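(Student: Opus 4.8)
The plan is to work directly from the definition \eqref{eq:intro01}, treating $S^\prime_{n,m}(\theta)$ as $N_{n,m}(\theta)/(\theta)_n$, where $N_{n,m}(\theta)=\sum_{k=m}^n(-1)^{n-k}S_n^{(k)}\theta^k$ is the numerator sum. The key is that the Stirling recurrence \eqref{eq:details03}, namely $S_{n+1}^{(k)}=S_n^{(k-1)}-nS_n^{(k)}$, should transfer almost verbatim to $N_{n,m}$ after accounting for the sign $(-1)^{n-k}$ and the factor $\theta^k$. First I would compute $N_{n+1,m}(\theta)=\sum_{k=m}^{n+1}(-1)^{n+1-k}S_{n+1}^{(k)}\theta^k$, substitute \eqref{eq:details03}, and split the sum into the two pieces coming from $S_n^{(k-1)}$ and from $-nS_n^{(k)}$. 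Shifting the index $k\mapsto k+1$ in the first piece and being careful with the endpoints of summation should yield something of the shape $N_{n+1,m}(\theta)=\theta\,N_{n,m-1}(\theta)+n\,N_{n,m}(\theta)$, possibly up to boundary correction terms at $k=m$ and $k=n+1$ that I expect to cancel because $S_n^{(k)}=0$ for $k>n$ and because of the special value $S_n^{(0)}=0$ for $n\ge1$.

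Once the numerator recurrence is established, I would divide through by $(\theta)_{n+1}=(\theta+n)(\theta)_n$ using \eqref{eq:intro03}. The term $n\,N_{n,m}(\theta)/(\theta)_{n+1}$ becomes $\frac{n}{\theta+n}S^\prime_{n,m}(\theta)$, and $\theta\,N_{n,m-1}(\theta)/(\theta)_{n+1}$ becomes $\frac{\theta}{\theta+n}S^\prime_{n,m-1}(\theta)$; multiplying across by $(\theta+n)$ gives exactly the first line of \eqref{eq:details04}. For the boundary case $m=n+1$, only the single term $k=n+1$ survives in $N_{n+1,n+1}(\theta)$, and since $S_{n+1}^{(n+1)}=1$ by \eqref{eq:details02} this equals $\theta^{n+1}$, so $S^\prime_{n+1,n+1}(\theta)=\theta^{n+1}/(\theta)_{n+1}$, which is the second line. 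The initial values in \eqref{eq:details05} follow by direct evaluation of \eqref{eq:intro01} for $n=0,1,2$ using \eqref{eq:details02}, e.g.\ for $n=2$ the relevant Stirling numbers are $S_2^{(2)}=1$ and $S_2^{(1)}=-1$, giving $(\theta)_2=\theta^2-\ (-1)\theta\cdot\ldots$; I would just tabulate these.

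The main obstacle is bookkeeping at the summation endpoints: when I shift the index in the $S_n^{(k-1)}$ sum, the lower limit produces a term involving $S_n^{(m-1)}$ that belongs to $N_{n,m-1}$ only if I extend the range correctly, and the upper limit produces a spurious $S_n^{(n)}\theta^{n+1}$ term that must be matched against the $k=n+1$ term of the other sum. I expect these to reconcile cleanly precisely because the full (un-truncated) identity \eqref{eq:intro02} already encodes the recurrence; the truncated version \eqref{eq:intro01} inherits it provided the cut at $k=m$ is respected by shifting $m\mapsto m-1$ in the $\theta\,N_{n,m-1}$ term — which is exactly why the recursion couples $m$ with $m-1$. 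An alternative, possibly cleaner, route would be to use the Cauchy integral representation \eqref{eq:details01}: write $N_{n,m}(\theta)=\frac{1}{2\pi i}\int_\calC (z)_n\sum_{k=m}^n(\theta/z)^k\,\frac{dz}{z}$, sum the finite geometric series, and then use $(z)_{n+1}=(z+n)(z)_n$ together with a contour manipulation; but I would only fall back on this if the direct index-shifting argument gets unwieldy.
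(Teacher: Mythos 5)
Your proposal is correct and follows essentially the same route as the paper, which proves the recursion by applying the Stirling recurrence \eqref{eq:details03} to the numerator sum, using $S_{n+1}^{(n+1)}=1$ for the boundary case and $(\theta)_{n+1}=(\theta+n)(\theta)_n$ to pass from the numerator identity to \eqref{eq:details04}. The index-shift and endpoint bookkeeping you describe works out cleanly (the spurious term carries $S_n^{(n+1)}=0$, and for $m\ge1$ the shifted lower limit is exactly that of $S^\prime_{n,m-1}$), so your write-up simply supplies the details the paper's one-line proof leaves implicit.
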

\begin{proof}
The proof simply follows from using the relation in \eqref{eq:details03}. We also  need $S_n^{(n)}=1$ and $(\theta)_{n+1}=(\theta+n)(\theta)_n$.
\end{proof}

Observe that there are no Stirling numbers in the recursion in \eqref{eq:details04}, apart from those needed to compute the starting values in \eqref{eq:details05}. This gives a very simple method to compute $S^\prime_{n,m}(\theta)$ for small or intermediate values of $n$. For large values of $n$ we prefer asymptotic representations.

The  Stirling numbers are very large for large values of  $n$ and $m\ll n$, see the value of $S_n^{(1)}$ in \eqref{eq:details02}. This makes straightforward evaluation of the sum in \eqref{eq:intro01} sensitive to overflow. This problem does not happen for the recursion  in \eqref{eq:details04}, because $0\le S^\prime_{n,m}(\theta)\le1$.

In the initial values we see that $S^\prime_{n,0}(\theta)=S^\prime_{n,1}(\theta)=1$ for $n=0,1$. More generally this follows from $S^{(0)}_n=0$ if $n\ge1$.

In the computation of cumulative distribution functions, like the classical gamma and beta cases, it is essential to consider the complementary relations. In the present case we use the complementary sum
\begin{equation}\label{eq:details06}
T^\prime_{n,m}(\theta)=1-S^\prime_{n,m}(\theta)=\frac{1}{(\theta)_n}\sum_{k= 0}^{m-1}(-1)^{n-k}S_n^{(k)}\theta^k.
\end{equation}
To avoid numerical cancellation when using the complementary relation, we should compute first the {\em primary} function, that is, $\min(S^\prime_{n,m}(\theta),T^\prime_{n,m}(\theta))$, and the other one from the complementary relation. The functions $T^\prime_{n,m}(\theta)$ satisfy the same recursion as $S^\prime_{n,m}(\theta)$ in \eqref{eq:details04}, of course with different starting values.

As mentioned above, Fu's $F_s$ in \eqref{eq:intro04} is of interest for population genetics applications \cite{pmid9335623}. From \eqref{eq:details06}, we also have a complementary equation for Fu's $F_s$,
\begin{equation}\label{eq:details07}
F_s=\ln\frac{1-T^\prime_{n,m}(\theta)}{T^\prime_{n,m}(\theta)}.
\end{equation}
Fu's $F_s$ ranges from $-\infty$ to $+\infty$ as $\theta$ runs through the interval $(0,\infty)$ and it vanishes when $\theta$ equals its {\em transition value} $\theta_t$ for which value  we have 
\begin{equation}\label{eq:details08}
S^\prime_{n,m}(\theta_t)=T^\prime_{n,m}(\theta_t)=\tfrac12.
\end{equation}

Both representations of $F_s$ are needed in numerical computations, because when $S^\prime_{n,m}(\theta)$ is close to 1, the form with $T^\prime_{n,m}(\theta)$ in \eqref{eq:details07} gives a more reliable computational representation.

\section{Summary of earlier results}\label{sec:earlier}
Because it is more convenient to work with 
$S^\prime_{n+1,m+1}(\theta)$ we proceed with
\begin{equation}\label{eq:summ01}
\begin{array}{r@{\,}c@{\,}l}
S_{n+1,m+1}(\theta)&=&\dsp{\frac{1}{(\theta+1)_{n}}\sum_{k= m}^{n} (-1)^{n-k}S_{n+1}^{(k+1)}\theta^k,}\\[8pt]
T_{n+1,m+1}(\theta)&=&\dsp{\frac{1}{(\theta+1)_{n}}\sum_{k= 0}^{m-1} (-1)^{n-k}S_{n+1}^{(k+1)}\theta^k.}
\end{array}
\end{equation}
The following result of our paper \cite{Chen:2020:FMA} is crucial for deriving asymptotic expansions of the statistical quantities.
\begin{theorem}\label{thm:stel02}
 Let $\calC_\rho$ be  a circle at the origin of the complex plane with radius $\rho>0$. Then $S_{n+1,m+1}^\prime(\theta)$ and $T_{n+1,m+1}^\prime(\theta)$ have representations as contour integrals
\begin{equation}  \label{eq:summ02}
\begin{array}{r@{\,}c@{\,}l}
S_{n+1,m+1}^\prime(\theta)&=&\dsp{\frac{\theta^{m}}{(\theta+1)_n}\frac{1}{2\pi i}\int_{\calC_\rho} \frac{(z+1)_n}{z^{m}}\,\frac{dz}{z-\theta},\quad \rho > \theta,}\\[8pt]
  T_{n+1,m+1}^\prime(\theta)&=&\dsp{\frac{\theta^{m}}{(\theta+1)_n}\frac{1}{2\pi i}\int_{\calC_\rho} \frac{(z+1)_n}{z^{m}}\,\frac{dz}{\theta-z},\quad \rho < \theta.}
\end{array}
\end{equation}
Here, $n$ and $m$ are positive integers, $0\le m\le n$, and $\theta$ is a real positive number.
The symbol $(\alpha)_n$ denotes the Pochhammer symbol introduced in \eqref{eq:intro03}. 
 \end{theorem}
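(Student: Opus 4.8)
The plan is to start from the defining sum in \eqref{eq:summ01} and recognize each coefficient $(-1)^{n-k}S_{n+1}^{(k+1)}$ as a Cauchy integral. Using \eqref{eq:details01} with $n$ replaced by $n+1$ and $k$ replaced by $k+1$, we have
\begin{equation*}
(-1)^{n-k}S_{n+1}^{(k+1)} = -\frac{1}{2\pi i}\int_{\calC_\rho}\frac{(z)_{n+1}}{z^{k+2}}\,dz,
\end{equation*}
where the extra minus sign comes from $(-1)^{n-k}=-(-1)^{(n+1)-(k+1)}$. Since $(z)_{n+1}=z\,(z+1)_n$, this simplifies to $-\tfrac{1}{2\pi i}\int_{\calC_\rho}(z+1)_n\,z^{-k-1}\,dz$. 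Substituting into the first line of \eqref{eq:summ01} and pulling the sum inside the integral gives $S_{n+1,m+1}^\prime(\theta)=-\tfrac{1}{(\theta+1)_n}\tfrac{1}{2\pi i}\int_{\calC_\rho}(z+1)_n\sum_{k=m}^n(\theta/z)^k\,z^{-1}\,dz$.

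Next I would sum the finite geometric series. Writing $w=\theta/z$, we have $\sum_{k=m}^n w^k = \tfrac{w^m-w^{n+1}}{1-w}$, which is valid for $|w|\ne 1$; on the circle $\calC_\rho$ with $\rho>\theta$ we indeed have $|w|=\theta/\rho<1$, so the expression is legitimate on the contour. This produces two terms. The $w^{n+1}$ term contributes an integrand proportional to $(z+1)_n\,\theta^{n+1}z^{-n-2}/(1-\theta/z) = (z)_{n+1}\,\theta^{n+1}z^{-n-2}/(z-\theta)$, a polynomial of degree $n+1$ in $z$ divided by $z^{n+2}$, times $1/(z-\theta)$; since the only singularity inside $\calC_\rho$ other than $z=0$ would be $z=\theta$, and a careful degree count shows this piece integrates to... actually the cleaner route is to observe that the pole at $z=\theta$ lies inside $\calC_\rho$ and evaluate both contributions by residues, checking that the $w^{n+1}$ piece cancels appropriately — or, more simply, to note that $\sum_{k=m}^n w^k$ and $\sum_{k=m}^\infty w^k = w^m/(1-w)$ differ by $\sum_{k=n+1}^\infty w^k$, and the latter, being $O(z^{-n-1})$ as a function with a convergent expansion in $1/z$ for $|z|>\theta$, integrates to zero around $\calC_\rho$ once multiplied by the polynomial $(z+1)_n$ of degree $n$ (the integrand is then $O(z^{-2})$ at infinity with no poles outside, hence deforming outward kills it). Either way one arrives at the effective integrand $(z+1)_n\,(\theta/z)^m\,z^{-1}/(1-\theta/z)$, and after simplifying $z^{-1}/(1-\theta/z) = 1/(z-\theta)$ and absorbing the $(-1)$ via $1/(z-\theta)\to$ the stated orientation, we get exactly the first formula in \eqref{eq:summ02}, with the $\theta^m$ pulled out front.

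For the complementary function $T_{n+1,m+1}^\prime(\theta)$ the same substitution applies, now with the sum running from $k=0$ to $m-1$, giving $\sum_{k=0}^{m-1}w^k=\tfrac{1-w^m}{1-w}$. Here we must take $\rho<\theta$, so $|w|>1$; the geometric sum identity still holds, and now $z=\theta$ lies outside $\calC_\rho$ while $z=0$ lies inside. The $1$ in $1-w^m$ contributes an integrand $(z+1)_n\,z^{-1}/(1-\theta/z)=(z+1)_n/(z-\theta)$ which is analytic inside $\calC_\rho$ (pole only at $z=\theta$, outside), hence integrates to zero; the surviving $-w^m$ term yields $(z+1)_n\,(\theta/z)^m\,z^{-1}/(\theta/z-1)=(z+1)_n\,(\theta/z)^m/(\theta-z)$, matching the second line of \eqref{eq:summ02} after factoring out $\theta^m$. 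As a consistency check, adding the two representations (which requires care since they use different radii — one shrinks $\calC_\rho$ in the second past no singularities down to $\rho<\theta$, or equivalently compares residues) should reproduce $S'+T'=1$ via the residue at $z=\theta$ of $(z+1)_n/(z-\theta)$ being $(\theta+1)_n$.

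The main obstacle is the interchange-and-bookkeeping step: justifying that the ``tail'' pieces of the geometric series (the $w^{n+1}$ term for $S'$ and the constant $1$ term for $T'$) integrate to zero, and doing this cleanly rather than by brute-force residue cancellation. The cleanest argument is the analyticity/decay-at-infinity observation sketched above, but one must be careful about which singularities sit inside versus outside each contour, which is precisely why the two cases carry the opposite radius conditions $\rho>\theta$ and $\rho<\theta$ and the opposite signs $z-\theta$ versus $\theta-z$. Everything else — the Cauchy integral substitution, the geometric sum, factoring out $\theta^m$ — is routine.
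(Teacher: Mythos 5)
Your overall strategy is the right one: insert the Cauchy representation \eqref{eq:details01} with $n\to n+1$, $k\to k+1$, interchange the (finite) sum with the contour integral, sum the geometric series in $w=\theta/z$, and dispose of the unwanted piece (the $w^{n+1}$ tail for $S'$, the constant term for $T'$) by the analyticity/decay-at-infinity argument; this is exactly what forces the conditions $\rho>\theta$ versus $\rho<\theta$ and produces the two denominators $z-\theta$ and $\theta-z$. (Note the paper itself only quotes this theorem from \cite{Chen:2020:FMA}, so there is no in-text proof to compare against; your route is the natural one.)

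There is, however, a concrete error at the very first step: you assert $(-1)^{n-k}=-(-1)^{(n+1)-(k+1)}$, but $(n+1)-(k+1)=n-k$, so no extra minus sign arises. The correct substitution reads
\begin{equation*}
(-1)^{n-k}S_{n+1}^{(k+1)}=\frac{1}{2\pi i}\int_{\calC_\rho}\frac{(z)_{n+1}}{z^{k+2}}\,dz
=\frac{1}{2\pi i}\int_{\calC_\rho}\frac{(z+1)_{n}}{z^{k+1}}\,dz .
\end{equation*}
With your spurious minus sign the $S'$ computation lands on $-\frac{\theta^m}{(\theta+1)_n}\frac{1}{2\pi i}\int_{\calC_\rho}\frac{(z+1)_n}{z^m}\frac{dz}{z-\theta}$, the \emph{negative} of the first line of \eqref{eq:summ02}; the phrase ``absorbing the $(-1)$ via the stated orientation'' cannot repair this, because the stated orientation for $S'$ is already $1/(z-\theta)$. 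Your $T'$ line only matches the theorem because the extra sign gets silently dropped when you pass from $z^{-1}/(1-\theta/z)$ to $1/(\theta/z-1)$, so the two halves of your write-up are mutually inconsistent in sign. Once the first step is corrected, everything else you wrote goes through cleanly: $\sum_{k=m}^{n}w^k=\frac{w^m-w^{n+1}}{1-w}$ with $\frac{1}{z(1-w)}=\frac{1}{z-\theta}$, the $w^{n+1}$ piece is $O(z^{-2})$ at infinity with all its poles inside $\calC_\rho$ when $\rho>\theta$ and hence integrates to zero, the term $(z+1)_n/(z-\theta)$ is analytic inside $\calC_\rho$ when $\rho<\theta$ and drops out, and the consistency check $S'+T'=1$ follows from the residue $\frac{\theta^m}{(\theta+1)_n}\cdot\frac{(\theta+1)_n}{\theta^m}=1$ at $z=\theta$.
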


The main asymptotic results follow from representations given in  \cite{Chen:2020:FMA} and are  summarised in the  next theorem.
\begin{theorem}\label{thm:stel03}
$S_{n+1,m+1}^\prime(\theta)$ and $T_{n+1,m+1}^\prime(\theta)$ have the representations 
\begin{equation}
\begin{aligned}
 & S_{n+1,m+1}^\prime(\theta)=I_x(m, n-m+1)+R_{n+1,m+1}^\prime(\theta), \quad & x= {\frac{\tau}{1+\tau}},\\
& T_{n+1,m+1}^\prime(\theta)=I_{1-x}(n-m+1,m)-R_{n+1,m+1}^\prime(\theta), \quad & 1-x= {\frac{1}{1+\tau}},
  \label{eq:summ03}
\end{aligned}
\end{equation}
where $I_x(p,q)$ is the incomplete beta function defined by
\begin{equation}\label{eq:summ04}
I_x(p,q)=\frac{1}{B(p,q)}\int_0^x t^{p-1}(1-t)^{q-1}\,dt,
\end{equation}
with
\begin{equation}\label{eq:summ05}
0 < x < 1,\quad p >0, \quad q> 0, \quad B(p,q)=\frac{\Gamma(p)\Gamma(q)}{\Gamma(p+q)}.
\end{equation}
The term $R_{n+1,m+1}^\prime(\theta)$ can be expanded in negative powers of the large parameter $n$, as will be explained in later sections. For the relation between $\tau$ and $\theta$ we refer to Definition~\ref{def:def01}. \end{theorem}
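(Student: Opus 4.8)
The plan is to extract the leading term of the exact contour-integral representation in Theorem~\ref{thm:stel02} and to \emph{define} $R_{n+1,m+1}^\prime(\theta)$ as the remainder. I would begin with the $S$-integral and make the elementary rescaling $z=\theta w$, which sends the circle $\calC_\rho$ to $|w|=\rho/\theta$, turns the condition $\rho>\theta$ into $|w|>1$, and sends the pole $z=\theta$ to $w=1$, giving
\[
S_{n+1,m+1}^\prime(\theta)=\frac{1}{2\pi i}\int_{|w|>1}\frac{(\theta w+1)_n/(\theta+1)_n}{w^m(w-1)}\,dw,
\]
and likewise $T_{n+1,m+1}^\prime(\theta)$ as the same integrand over $|w|<1$ with $w-1$ replaced by $1-w$. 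The point of departure is then the elementary residue identity
\[
I_x(m,n-m+1)=\sum_{k=m}^{n}\binom nk x^k(1-x)^{n-k}=\frac{1}{2\pi i}\int_{|w|>1}\frac{(1-x+xw)^n}{w^m(w-1)}\,dw,
\]
whose $|w|<1$ analogue yields $I_{1-x}(n-m+1,m)$; both are verified by summing the residues at $w=0$ and $w=1$, the residue $1$ at $w=1$ being exactly what distinguishes the two regimes. So the whole problem is reduced to comparing $(\theta w+1)_n/(\theta+1)_n$ with $(1-x+xw)^n$ for a suitable $x=\tau/(1+\tau)$.

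The next step is to write $(\theta w+1)_n/(\theta+1)_n=(1-x+xw)^n\,g_n(w)$ with $g_n(1)=1$ and
\[
\ln g_n(w)=\sum_{j=1}^n\ln\!\Bigl(1+\tfrac{\theta(w-1)}{\theta+j}\Bigr)-n\ln\bigl(1+x(w-1)\bigr),
\]
and to choose $\tau$ (this is Definition~\ref{def:def01}) so that $\ln g_n(w)$ vanishes to the order required for $R_{n+1,m+1}^\prime(\theta)$ to be one power of $n$ smaller than the main term — at leading order this forces a relation of the type $\tfrac{\tau}{1+\tau}=\tfrac{\theta}{n}\sum_{j=1}^{n}\tfrac{1}{\theta+j}$, refined as the definition prescribes. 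With this choice the $g_n\equiv1$ part of the contour integral is, by the residue identity above, \emph{exactly} $I_x(m,n-m+1)$ for $S^\prime$ and $I_{1-x}(n-m+1,m)$ for $T^\prime$, and $R_{n+1,m+1}^\prime(\theta)$ is defined to be the leftover integral $\tfrac{1}{2\pi i}\int_{|w|>1}\tfrac{(1-x+xw)^n(g_n(w)-1)}{w^m(w-1)}\,dw$. The complementary formula is then immediate from $S_{n+1,m+1}^\prime(\theta)+T_{n+1,m+1}^\prime(\theta)=1$ and $I_x(m,n-m+1)+I_{1-x}(n-m+1,m)=1$, which forces the $T^\prime$-remainder to be $-R_{n+1,m+1}^\prime(\theta)$; and the expansion of $R_{n+1,m+1}^\prime(\theta)$ in negative powers of $n$ — deferred to the later sections — will come from an Euler--Maclaurin estimate of $\ln g_n(w)$ followed by a steepest-descent/Watson-type expansion of the contour integral.

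The hard part is uniformity. One must pin down Definition~\ref{def:def01} so that $I_x(m,n-m+1)$ is the leading term \emph{uniformly} in $0\le m\le n$ and in the intended range of $\theta$: a naively mean-matched power $(1-x+xw)^n$ is only the bulk (Gaussian-type) model and can leave an $\bigO(1)$ discrepancy precisely when the pole $z=\theta$ sits near the saddle $z_0$ of $(z+1)_n/z^m$ (determined by $\sum_{j=1}^n 1/(z_0+j)=m/z_0$), which is the configuration that the incomplete-beta canonical form — a genuine two-parameter family with its own endpoint and bulk regimes — is designed to absorb; so I expect Definition~\ref{def:def01} to be a conformal/saddle-point relation rather than plain mean matching. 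One must also track the pole at $z=\theta$ through the deformation of $\calC_\rho$ onto the descent path: picking it up ($\rho>\theta$) contributes the $1$ that makes $0\le S_{n+1,m+1}^\prime(\theta)\le1$, not picking it up ($\rho<\theta$) contributes nothing — the mechanism behind the two cases in Theorem~\ref{thm:stel02} and in \eqref{eq:summ03}. Since the statement here only asserts the representation and postpones the expansion of $R_{n+1,m+1}^\prime(\theta)$, it is enough at this point to verify the two incomplete-beta residue identities, to set $R_{n+1,m+1}^\prime(\theta):=S_{n+1,m+1}^\prime(\theta)-I_x(m,n-m+1)$, and to deduce the $T^\prime$-formula from the complementary relations; the quantitative uniform analysis is drawn from \cite{Chen:2020:FMA} and elaborated below.
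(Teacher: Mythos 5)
Your formal manipulations are fine as far as they go: the rescaling $z=\theta w$ of the integrals in Theorem~\ref{thm:stel02}, the residue identity expressing $I_x(m,n-m+1)$ (equivalently \eqref{eq:summ20}) as a contour integral, the cancellation of the pole at $w=1$ because $g_n(1)=1$, and the deduction of the $T^\prime$ line from $S^\prime+T^\prime=1$ together with \eqref{eq:summ06}. The genuine gap is the choice of $x$. The theorem does not say ``there is some $x$ for which this decomposition holds'' (that would be vacuous, since $R$ is defined as the difference); it asserts the representation for the specific $x=\tau/(1+\tau)$ with $\tau$ fixed by Definition~\ref{def:def01}, i.e.\ by matching the phase-function \emph{values} at the pole, $\phi(\theta)-\phi(z_0)=\chi(\tau)-\chi(t_0)$ with $\sign(\theta-z_0)=\sign(\tau-t_0)$, where $\phi$ and $\chi$ are given in \eqref{eq:summ08} and \eqref{eq:summ10}. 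Your proposal leaves this choice open (``to the order required''), and your concrete leading-order prescription --- mean matching $\tau/(1+\tau)=(\theta/n)\sum_{j=1}^{n}1/(\theta+j)$ --- defines a different map $\theta\mapsto\tau$, agreeing with the paper's only at the transition configuration $\theta=z_0$ (both then give $\tau=t_0$). With such an $x$ the identity $S^\prime=I_x+R$ is trivially true, but the substantive clause of the theorem fails: away from the transition, $S^\prime_{n+1,m+1}(\theta)$ and $I_x(m,n-m+1)$ carry different exponential factors, so their difference is not of the structured form $e^{-\chi(\tau)}\binom{n}{m-1}\sum_k G_k(t_0)\nu^{-k}$ with bounded coefficients that the later sections (and Proposition~\ref{prop:prop01}) rely on; no expansion of $R$ in negative powers of $n$ with uniform validity exists for that choice. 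You half-recognize this in your last paragraph, but the proof never supplies the correct correspondence, which is the actual content being summarized here.

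The paper's route makes the correct choice automatic: use \eqref{eq:summ11} as a change of variables $z\mapsto t$ in \eqref{eq:summ07}, producing \eqref{eq:summ14}, in which the pole $z=\theta$ is mapped exactly to $t=\tau$ and $f(t)=\frac{1}{z-\theta}\frac{dz}{dt}$ has a simple pole there with residue $1$. Subtracting $1/(t-\tau)$ as in \eqref{eq:summ15} reproduces the incomplete beta function \emph{exactly} through \eqref{eq:summ19}, and leaves $R^\prime_{n+1,m+1}(\theta)$ in the form \eqref{eq:summ16} with an amplitude $g$ regular at $t=\tau$ and $t=t_0$ --- precisely what the integration-by-parts scheme of Theorem~\ref{thm:stel06} needs, uniformly as $\tau\to t_0$ (pole near saddle). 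If you prefer to keep the multiplicative splitting in the $w$ variable, you must still prove that your remainder integral admits such an expansion, and that requirement forces essentially the same condition: $x$ has to be chosen so that the exponential factor of the approximant equals $e^{\phi(\theta)-\phi(z_0)}$, i.e.\ the function-value matching of Definition~\ref{def:def01}, not a matching of means or derivatives. As it stands, the proposal proves either a weaker, essentially tautological statement or a statement about a different $\tau$, and so does not establish Theorem~\ref{thm:stel03}.
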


Observe that in the representations given in this theorem the complementary relation in \eqref{eq:details06} is preserved because of the complementary property of the incomplete beta function:
\begin{equation}\label{eq:summ06}
I_x(p,q)=1-I_{1-x}(q,p).
\end{equation}

\subsection{The asymptotic approach}\label{sec:approach}
The representations given  in Theorem~\ref{thm:stel03} are obtained in  \cite{Chen:2020:FMA} by using the saddle point method. The first step is the introduction of a phase function $\phi(z)$. We write
\begin{equation}\label{eq:summ07}
S_{n+1,m+1}^\prime(\theta)=\frac{e^{-\phi(\theta)}}{2\pi i}\int_{\calC_\rho} e^{\phi(z)}
\,\frac{dz}{z-\theta},\quad \rho>\theta,
\end{equation}
where  ${\calC_\rho}$ is a circle as in Theorem~\ref{thm:stel02} and 
\begin{equation}\label{eq:summ08}
\begin{aligned}
\phi(z)&=\ln\left( (z+1)_n\right)-m\ln z =\sum_{k=0}^{n-1}\ln(z+1+k)-m\ln z\\
&=\ln\Gamma(z+1+n)-\ln\Gamma(z+1)-m\ln z.
\end{aligned}
\end{equation}
For positive values of $z$, we have the limiting forms
\begin{equation}\label{eq:summ09}
\phi(z)\sim -m\ln z, \quad z \to 0;\quad \quad \phi(z)\sim n\ln (z+1),\quad z\to\infty,
\end{equation}
where the second estimate comes from $\Gamma(z+1+n)/\Gamma(z+1)\sim(z+1)^n$. In addition, there is one positive minimum $z_0$ of $\phi(z)$. A proof is given in  \cite{Temme:1993:AES}.

The function 
\begin{equation}\label{eq:summ10}
\chi(t)=n\ln(t+1)-m\ln t, \quad t>0,
\end{equation}
has the same limiting behaviour as $\phi(z)$ at $t=0$ and as $t\to\infty$, and it has one positive minimum $t_0=m/(n-m)$. In fact, these functions behave quite similar for positive values of their arguments, and we have the following lemma.
\begin{lemma}\label{lem:lem01}
Consider for positive $z$ and $t$ the equation
\begin{equation}\label{eq:summ11}
\phi(z)-\phi(z_0)=\chi(t)-\chi(t_0).
\end{equation}
Then there is a one-to-one relation between $z$ and $t$ when we use the following condition: $\sign(z-z_0)=\sign(t-t_0)$.
\end{lemma}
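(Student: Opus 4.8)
The plan is to exploit that both $\phi$ and $\chi$ are \emph{unimodal} on $(0,\infty)$: each is continuous, tends to $+\infty$ at each end of the interval, and has a single interior critical point that is its global minimum. For $\phi$ this is precisely the property quoted after \eqref{eq:summ09} and proved in \cite{Temme:1993:AES}; for $\chi$ it follows at once from \eqref{eq:summ10}, since $\chi'(t)=\frac{n}{t+1}-\frac{m}{t}$ has the unique positive zero $t_0=m/(n-m)$. Because a continuous derivative can change sign only at one of its zeros, unimodality yields that $\phi$ is strictly decreasing on $(0,z_0]$ and strictly increasing on $[z_0,\infty)$, and likewise $\chi$ on $(0,t_0]$ and $[t_0,\infty)$. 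First I would record these monotonicity statements.

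Next I would introduce the nonnegative functions $\Phi(z)=\phi(z)-\phi(z_0)$ and $X(t)=\chi(t)-\chi(t_0)$. By the monotonicity just noted and the intermediate value theorem, $\Phi$ restricts to a strictly decreasing continuous bijection of $(0,z_0]$ onto $[0,\infty)$ and to a strictly increasing continuous bijection of $[z_0,\infty)$ onto $[0,\infty)$, and $X$ has the same structure on $(0,t_0]$ and on $[t_0,\infty)$; I would name the four continuous inverse branches.

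Then, given $z>0$, I would set $v=\Phi(z)\ge 0$ and solve $X(t)=v$ under the sign constraint. If $v=0$ then $z=z_0$ and the only solution is $t=t_0$, which meets $\sign(t-t_0)=0=\sign(z-z_0)$. If $v>0$, exactly one of the two preimages of $v$ under $X$ lies on the side of $t_0$ prescribed by $\sign(z-z_0)$, so there is a unique admissible $t$; this defines the map $z\mapsto t$. Interchanging the roles of $\phi$ and $\chi$ gives the inverse map by the identical argument, so \eqref{eq:summ11} together with $\sign(z-z_0)=\sign(t-t_0)$ is a one-to-one correspondence between the positive $z$ and positive $t$, which is the claim.

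I expect the only real obstacle to be the unimodality of $\phi$, i.e.\ that $\phi'(z)=\sum_{k=0}^{n-1}\frac{1}{z+1+k}-\frac{m}{z}$ has a single positive zero (so that the minimum $z_0$ is truly the only critical point); this is exactly the input imported from \cite{Temme:1993:AES}, and everything else is the routine inversion of a monotone continuous function. As a side remark useful for the later expansions, since $\phi''(z_0)>0$ and $\chi''(t_0)>0$ the correspondence is real-analytic away from $z_0$ and extends smoothly through $z_0$ with $dt/dz=\sqrt{\phi''(z_0)/\chi''(t_0)}$ there, but this refinement is not needed for the bare one-to-one statement.
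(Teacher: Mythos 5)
Your proof is correct and follows essentially the same route as the paper: both arguments rest on the fact that $\phi-\phi(z_0)$ and $\chi-\chi(t_0)$ are nonnegative, vanish only at $z_0$ and $t_0$, and are strictly monotone on either side of these points, so the sign condition pairs the left branches with each other and the right branches with each other, giving the unique correspondence. If anything, your write-up is slightly more careful, since it uses only unimodality (a single positive critical point together with the limits $+\infty$ at both ends of $(0,\infty)$) and the intermediate value theorem, whereas the paper's proof appeals to a figure and loosely calls the curves convex, which is not needed for the conclusion.
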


\begin{proof}
In Figure~1 of \cite{Chen:2020:FMA} we have drawn graphs of both functions for $m=38$ and $n=100$. Both derivaties of the non-negative convex functions have a unique positive zero  $z_0$ and $t_0$ and their convex curves touch the positive real axis at $z_0$ and $t_0$. The sign condition  for the relation in \eqref{eq:summ11} means that the left branches of the curves correspond with functions values for $z\in(0,z_0]$ and $t\in(0,t_0]$, and the right branches with values for $z\in[z_0,\infty)$ and $t\in[t_0,\infty)$. Clearly, we can uniquely determine $z(t)$ and $t(z)$ for positive values of these parameters.

\end{proof}

Before we start with deriving the asymptotic representations given in Theorem~\ref{thm:stel03} we define the following special points used in this paper.

\begin{definition}\label{def:def01}{\bf Special points}

\begin{enumerate}
\item
The point $z_0$, the positive  minimum of $\phi(z)$ and the positive solution of the equation $\phi^\prime(z)=0$, where
\begin{equation}  \label{eq:summ12}
\begin{aligned}
  \phi^\prime(z)&=\sum_{k=0}^{n-1}\frac{1}{z+1+k}-\frac{m}{z}\\
  &=\psi(z+n+1)-\psi(z+1)-\frac{m}{ z}, \quad \psi(z)=\frac{\Gamma^\prime(z)}{\Gamma(z)},
\end{aligned}
\end{equation}
is called the {\em saddle point} of the integral in \eqref{eq:summ07}.

\item
The value $\theta_t$ for which $S^\prime_{n,m}(\theta_t)=T^\prime_{n,m}(\theta_t)=\frac12$ is called  the {\em transition point} of  $S^\prime_{n,m}(\theta)$ and $T^\prime_{n,m}(\theta)$. 
\item
The positive value of $t$ that satisfies the relation in \eqref{eq:summ11} when $z$ is replaced by $\theta$ is called $\tau$. That is, when we write the general solution of \eqref{eq:summ11}  as $t(z)$, then $\tau=t(\theta)$. Also, $\tau$ is the positive solution of
\begin{equation}\label{eq:summ13}
\phi(\theta)-\phi(z_0)=\chi(\tau)-\chi(t_0),\quad \sign(\theta-z_0)=\sign(\tau-t_0).
\end{equation}
\end{enumerate}
\end{definition}

The relation in \eqref{eq:summ11} will be used as a transformation of variables (which is also used in \cite{Temme:1993:AES}) and  \eqref{eq:summ07} becomes
\begin{equation}\label{eq:summ14}
\begin{aligned}S_{n+1,m+1}^\prime(\theta)&=
\frac{e^{-\chi(\tau)}}{2\pi i}\int_{{\cal C}_\sigma} \frac{(t+1)^n}{t^{m}}f(t)\,dt,\\
f(t)&=\frac{1}{z-\theta}\frac{dz}{dt},\quad \frac{dz}{dt}=\frac{\chi^\prime(t)}{\phi^\prime(z)},
\quad \chi^{\prime}(t)=(n-m)\frac{t-t_0}{t(1+t)},
\end{aligned}
\end{equation}
with $t_0=m/(n-m)$, where we have used  the relation for $\chi(\tau)$ in \eqref{eq:summ13}. The value $z=\theta$ (a pole of the integrand in  \eqref{eq:summ07}) corresponds with  $t=\tau$ (see Definition~\ref{def:def01}), and this  means that the function $f(t)$ will have a pole at $t=\tau$.

The main asymptotic result of \cite{Chen:2020:FMA} is given in the following theorem.

\begin{theorem}\label{thm:stel04}
Let the function $g(t)$ be defined by 
\begin{equation}\label{eq:summ15}
g(t)=f(t)-\frac{1}{t-\tau},
\end{equation}
where $f(t)$ is defined in \eqref{eq:summ14}. Then the function $R_{n+1,m+1}^\prime(\theta)$ of the  representations given in \eqref{eq:summ03}, has the integral representation
\begin{equation}\label{eq:summ16}
R_{n+1,m+1}^\prime(\theta)=
\frac{e^{-\chi(\tau)}}{2\pi i}\int_{{\cal C}_\sigma} \frac{(t+1)^n}{t^{m}}g(t)\,dt,
\end{equation}
where $\calC_\sigma$ is a contour around the origin and inside the domain where $g(t)$  is analytic. 
A first-order approximation is given by
\begin{equation}\label{eq:summ17}
R_{n+1,m+1}^\prime(\theta)\sim e^{-\chi(\tau)}\binom{n}{m-1}\ g(t_0),\quad t_0=\frac{m}{n-m},
\end{equation}
where
\begin{equation}\label{eq:summ18}
g(t_0)=f(t_0)-\frac{1}{t_0-\tau},\quad f(t_0)=\frac{1}{z_0-\theta}\sqrt{\frac{\chi^{(2)}(t_0)}{\phi^{(2)}(z_0)}}.
\end{equation}
\end{theorem}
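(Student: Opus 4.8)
\textbf{Proof proposal for Theorem~\ref{thm:stel04}.}

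The plan is to combine the definition of $g(t)$ in \eqref{eq:summ15} with the exact representation \eqref{eq:summ14}, and then apply the saddle point method to the resulting integral. First I would split the integrand in \eqref{eq:summ14} as $f(t)=g(t)+1/(t-\tau)$, so that
\[
S_{n+1,m+1}^\prime(\theta)=\frac{e^{-\chi(\tau)}}{2\pi i}\int_{{\cal C}_\sigma}\frac{(t+1)^n}{t^m}\,g(t)\,dt
+\frac{e^{-\chi(\tau)}}{2\pi i}\int_{{\cal C}_\sigma}\frac{(t+1)^n}{t^m}\,\frac{dt}{t-\tau}.
\]
The second integral should be identified with the incomplete beta function $I_x(m,n-m+1)$ with $x=\tau/(1+\tau)$: expand $1/(t-\tau)$ and $(t+1)^n$ appropriately, or better, recognise that $\frac{1}{2\pi i}\int_{{\cal C}_\sigma}\frac{(t+1)^n}{t^m}\frac{dt}{t-\tau}$ is exactly the partial sum $\sum_{k=0}^{m-1}\binom{n}{k}\tau^{k-m}$ (residues at $t=0$ only, since $\tau$ lies outside ${\cal C}_\sigma$ when $\sigma<\tau$), and this partial sum times $e^{-\chi(\tau)}=\tau^m(1+\tau)^{-n}$ is the standard binomial-sum representation of $I_x(m,n-m+1)$. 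This pins down $R_{n+1,m+1}^\prime(\theta)$ as the first integral, which is \eqref{eq:summ16}; analyticity of $g$ on a neighbourhood of ${\cal C}_\sigma$ follows because the only singularity of $f$ inside, the pole at $t=\tau$, has been subtracted, and the apparent singularities at $t=z_0\leftrightarrow t_0$ coming from $\phi^\prime(z)$ in the denominator of $dz/dt$ are removable since $\chi^\prime(t_0)=0$ as well (this is the content of Lemma~\ref{lem:lem01}).

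Next, for the first-order approximation \eqref{eq:summ17}, I would apply Laplace's method to \eqref{eq:summ16}. Write $(t+1)^n/t^m=e^{\chi(t)}$ with $\chi$ as in \eqref{eq:summ10}; on the real axis $\chi$ has its unique minimum at $t_0=m/(n-m)$, so the steepest descent contour ${\cal C}_\sigma$ is deformed to pass through $t_0$ in the imaginary direction. The standard saddle point estimate gives
\[
\frac{1}{2\pi i}\int_{{\cal C}_\sigma}e^{\chi(t)}g(t)\,dt\sim \frac{e^{\chi(t_0)}}{\sqrt{2\pi\,\chi^{(2)}(t_0)}}\,g(t_0),
\]
and one checks that $e^{\chi(t_0)}/\sqrt{2\pi\,\chi^{(2)}(t_0)}=\binom{n}{m-1}(1+o(1))$ by a direct Stirling-formula computation (here $\chi^{(2)}(t_0)>0$, and $\chi(t_0)=n\ln\frac{n}{n-m}-m\ln\frac{m}{n-m}$, which is $\ln\binom{n}{m}$ up to the usual $\sqrt{2\pi m(n-m)/n}$ factor — one should be careful whether the clean answer is $\binom{n}{m}$ or $\binom{n}{m-1}$, since these differ by $m/(n-m+1)=t_0\cdot\frac{n-m}{n-m+1}$, and the $t_0$ can be absorbed; I would trust the paper's bookkeeping here and verify the constant matches \cite{Chen:2020:FMA}). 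Multiplying by $e^{-\chi(\tau)}$ yields \eqref{eq:summ17}.

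Finally, \eqref{eq:summ18} is just the evaluation of $g(t_0)=f(t_0)-1/(t_0-\tau)$ using the definition of $f$ in \eqref{eq:summ14}: $f(t_0)=\frac{1}{z_0-\theta}\left.\frac{dz}{dt}\right|_{t_0}$, and from $dz/dt=\chi^\prime(t)/\phi^\prime(z)$ one obtains $\left.\frac{dz}{dt}\right|_{t_0}$ by l'Hôpital, since both $\chi^\prime(t_0)=0$ and $\phi^\prime(z_0)=0$: $\left.\frac{dz}{dt}\right|_{t_0}=\sqrt{\chi^{(2)}(t_0)/\phi^{(2)}(z_0)}$, with the positive square root forced by the sign condition $\sign(t-t_0)=\sign(z-z_0)$ of Lemma~\ref{lem:lem01}.

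\textbf{Main obstacle.} The delicate point is not the Laplace estimate itself but the bookkeeping: (i) justifying the contour deformation from ${\cal C}_\sigma$ (a small circle with $\sigma<\tau$) to the steepest descent path through $t_0$ without crossing the pole at $t=\tau$ — one must track whether $t_0<\tau$ or $t_0>\tau$, i.e. whether $\theta<z_0$ or $\theta>z_0$, but since $g$ is analytic across $t=\tau$ the deformation is in fact unobstructed, which is exactly why subtracting the pole was the right move; and (ii) matching the constant $\binom{n}{m-1}$ precisely, including confirming that the removable singularity at $t_0$ does not secretly contribute a correction and that the half-integer powers of $n$ in Stirling's formula cancel to leave the stated binomial coefficient. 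The analyticity claim for $g$ near ${\cal C}_\sigma$ — really a statement that the map $t\mapsto z(t)$ of Lemma~\ref{lem:lem01} extends holomorphically off the real axis in a neighbourhood of $t_0$ — is the one place where I would want to cite \cite{Chen:2020:FMA} or \cite{Temme:1993:AES} rather than reprove it.
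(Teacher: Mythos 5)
Your overall architecture for \eqref{eq:summ16} is the paper's: split $f(t)=g(t)+1/(t-\tau)$, identify the pole integral with the incomplete beta function by residues (the paper records exactly this identity in \eqref{eq:summ19}--\eqref{eq:summ21}), and observe that $g$ is analytic at $t=\tau$ so the leftover integral is $R_{n+1,m+1}^\prime(\theta)$. However, your bookkeeping of the contour is wrong as written. The representation \eqref{eq:summ14} equals $S_{n+1,m+1}^\prime(\theta)$ only when ${\cal C}_\sigma$ \emph{encloses} the pole $t=\tau$, since it is the image of the $z$-contour with $\rho>\theta$ in \eqref{eq:summ02}; if you shrink the contour so that $\tau$ lies outside, you cross the pole of $f$ (residue $1$, since $f\sim 1/(t-\tau)$ there) and the $f$-integral then represents $S^\prime-1=-T^\prime$. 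Moreover, with $\tau$ outside, the residue of $(t+1)^n t^{-m}/(t-\tau)$ at $t=0$ is $-\sum_{k=0}^{m-1}\binom{n}{k}\tau^{k-m}$ (note the sign), and $(1+\tau)^{-n}\sum_{k=0}^{m-1}\binom{n}{k}\tau^{k}$ is not $I_x(m,n-m+1)$ but its complement $I_{1-x}(n-m+1,m)$, by \eqref{eq:summ21}. Your two unit errors cancel, so you land on the correct identity $R=S^\prime-I_x$, but the derivation is internally inconsistent. The clean route is to keep the contour enclosing both $0$ and $\tau$: the residues at $\tau$ and $0$ then give $\sum_{j=m}^{n}\binom{n}{j}\tau^{j-m}$, i.e.\ precisely \eqref{eq:summ19} with \eqref{eq:summ20}.

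For \eqref{eq:summ17} you take a genuinely different route (Laplace's method on the circle through $t_0$ plus Stirling matching), and this is where the substantive gap lies: your method only yields a prefactor equal to $\binom{n}{m-1}\bigl(1+O(1/\nu)\bigr)$ and, as you admit, cannot distinguish $\binom{n}{m-1}$ from $t_0\binom{n}{m}$ without "trusting the paper's bookkeeping". The paper's mechanism (visible in the proof of Theorem~\ref{thm:stel06}) makes this exact and elementary: $\frac{1}{2\pi i}\oint_{{\cal C}_\sigma}(t+1)^n t^{-m}\,dt=\binom{n}{m-1}$ is simply the residue at the origin (the coefficient of $t^{m-1}$ in $(1+t)^n$), so writing $g(t)=g(t_0)+(t-t_0)H_0(t)$ and integrating by parts with $\chi^\prime(t)=\nu(t-t_0)/(t(1+t))$ gives $R=e^{-\chi(\tau)}\binom{n}{m-1}g(t_0)+O(\nu^{-1})$ with the exact binomial constant, no contour deformation through the saddle and no Stirling asymptotics needed; this is also what generates the complete expansion. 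Since \eqref{eq:summ17} is stated with $\sim$, your Laplace estimate would suffice if you also bounded the remainder at relative order $\nu^{-1}$, but as presented the verification of the stated constant is left open, which is precisely the point the residue/integration-by-parts argument settles. Your evaluation of \eqref{eq:summ18} via l'H\^opital at the saddle, with the positive root fixed by the sign condition of Lemma~\ref{lem:lem01}, is correct and agrees with $z_1$ in \eqref{eq:deriv11}.
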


The incomplete beta function (see  \eqref{eq:summ04}) is used with the representation
\begin{equation}\label{eq:summ19}
I_{\frac{\tau}{1+\tau}}(m, n-m+1)=\frac{e^{-\chi(\tau)}}{2\pi i}\int_{{\cal C}_\sigma} \frac{(t+1)^n}{t^{m}}\,\frac{dt}{t-\tau}.
\end{equation}
This function has the representation (see \cite[\S8.17(i)]{Paris:2010:INC}) 
\begin{equation}\label{eq:summ20}
I_\frac{\tau}{1+\tau}(m,n-m+1)=(1+\tau)^{-n}\sum_{j=m}^n\binom{n}{j}\tau^j,
\end{equation}
and from the complementary relation in \eqref{eq:summ06} it follows that $I_\frac{1}{1+\tau}(n-m+1,m)$ used in \eqref{eq:summ03} has the expansion
\begin{equation}\label{eq:summ21}
I_\frac{1}{1+\tau}(n-m+1,m)=(1+\tau)^{-n}\sum_{j=0}^{m-1}\binom{n}{j}\tau^j.
\end{equation}

\begin{remark}\label{rem:remark01}
The role of the transition point $\theta_t$  in connection with Fu's $F_s$ is explained before   \eqref{eq:details08}. The transition value $\theta_t$ can be obtained by using the inversion methods of \S\ref{sec:invert}. On the other hand, the main term in the first line of \eqref{eq:summ03} is the incomplete beta function, which function has \cite{Gil:2017:EAF}  the transition point $x_t$  close to $x=p/(p+q)$, which gives $\tau=m/(n+m+1)$. This point is close to  $t_0=m/(n+m)$, the zero of $\chi^\prime(t)$, see \eqref{eq:summ14} and saddle point of the integral in \eqref{eq:summ14}, which corresponds to $z_0$, the saddle point of integral in \eqref{eq:summ07}.
We conclude  that the saddle point $z_0$ is a good approximation of the transition value $\theta_t$. 
\end{remark}

\section{The inversion problem}\label{sec:invert}
We consider the following inversion problem: let $m$ and $n$ be given, together with a value $s\in(0,1)$.
Then find $\theta$ such that 
\begin{equation}\label{eq:inv01}
S_{n,m}^\prime(\theta)=s.
\end{equation}
Since $S_{n,m}^\prime(0)=0$ and $S_{n,m}^\prime(\theta)\to 1$ as $\theta\to\infty$, and $S_{n,m}^\prime(\theta)$ is an increasing function of $\theta$ (see also Figure~\ref{fig:fig01} ({\bf Right}), there is a unique solution $\theta$ of this problem.

We consider two approaches to solve this problem, in the first one we use Newton iteration and the other one  is especially useful when the parameters $n$ and $m$ are large enough to use asymptotic approximations.

In both methods we use a starting value $\theta_0$ that follows  from the value $x$ that solves the reduced equation
\begin{equation}\label{eq:inv02}
I_{x}(p, q)=s, \quad p=m, \quad q=n-m+1, \quad x=\frac{\tau}{1+\tau},
\end{equation}
where $I_x(p,q)$ is the incomplete beta function used in \eqref{eq:summ03}. With this value $x$ we compute $\tau=x/(1-x)$ and the initial value $\theta_0$ in the Newton method  then follows from the relation between $\tau$ and $\theta$  as explained in  Definition~\ref{def:def01}. 
We use the reduced equation  because we consider the incomplete beta function as the main asymptotic approximant of $S_{n+1,m+1}^\prime(\theta)$. Also for small values of $n$ and $m$ it gives a useful initial value $\theta_0$.

The inversion of the incomplete beta function is extensively considered in the literature.  For an approach for large variable $p$ and $q$, see 
\cite{Temme:1992:AIB} while in \cite{Gil:2017:EAF}  a fourth order fixed point method and several other approaches are discussed.
For an overview  of  the inversion of other classical cumulative distribution functions, we refer to \cite[Chapter~42]{Temme:2015:AMI}.

\begin{remark}\label{rem:remark02}
The inversion $S_{n,m}^\prime(\theta)=s$  can be replaced by the equation  for the complementary function: $T_{n,m}^\prime(\theta)=1-s$, which is relevant when  $s\sim1$, and even more relevant when $s=1-\sigma$, when $\sigma$ is known in detail as a small positive number. 
\end{remark}

\begin{remark}\label{rem:remark03}
The inversion of Fu's $F_s$ (see\eqref{eq:intro04}), that is, solving the equation $F_s=f$, $f\in\RR$, follows immediately from our methods for solving $S_{n,m}^\prime(\theta)=s$. The equation   $F_s=f$ is equivalent with solving
\begin{equation}\label{eq:inv03}
S_{n,m}^\prime(\theta)=\frac{e^f}{1+e^f},\quad T_{n,m}^\prime(\theta)=\frac{1}{1+e^f}.
\end{equation}
When $f$ is a large positive number, it is very relevant to solve the second equation.
\end{remark}

\subsection{The iterative  inversion method}\label{sec:invmeth1}
When solving the equation in \eqref{eq:inv01} with the Newton iterative method we compute a sequence of vales $\theta_j$, $j=0,1,\ldots$, from the scheme
\begin{equation}\label{eq:inv04}
\theta_{j+1}=\theta_j-\frac{f(\theta_j)-s}{f^\prime(\theta_j)},\quad j=0,1,2,\ldots, \quad  f(\theta)=S_{n,m}^\prime(\theta).
\end{equation}
The starting value $\theta_0$ is obtained from the reduced equation in \eqref{eq:inv02}. The derivative of $S_{n,m}^\prime(\theta)$ follows from  the following lemma.

\begin{theorem}\label{thm:stel05}
\begin{equation}\label{eq:inv05}
\frac{d}{d\theta} S_{n,m}^\prime(\theta)=-S_{n,m}^\prime(\theta)\sum_{k=0}^{n-1}\frac{1}{k+\theta}+\widehat{S}_{n,m}^\prime(\theta),
\end{equation}
where
\begin{equation}\label{eq:inv06}
\widehat{S}_{n,m}^\prime(\theta)=
\frac{1}{(\theta)_n}\sum_{k= m}^n(-1)^{n-k}kS_n^{(k)}\theta^{k-1}.
\end{equation}
These functions satisfy the recurrence relation
\begin{equation}\label{eq:inv07}
(\theta+n)\widehat{S}^\prime_{n+1,m}(\theta)=n\widehat{S}^\prime_{n,m}(\theta)+\theta \widehat{S}^\prime_{n,m-1}(\theta)+S^\prime_{n,m-1}(\theta),\quad 1\le m \le n. 
\end{equation}
\end{theorem}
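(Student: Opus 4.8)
The plan is to establish \eqref{eq:inv05} directly from the series definition \eqref{eq:intro01} by differentiating, then to obtain \eqref{eq:inv07} by combining the known recursion from Theorem~\ref{thm:stel01} with the Stirling recurrence \eqref{eq:details03}, in the same spirit as the proof of that theorem.

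First I would prove \eqref{eq:inv05}. Write $S_{n,m}^\prime(\theta)=(\theta)_n^{-1}P_{n,m}(\theta)$ where $P_{n,m}(\theta)=\sum_{k=m}^n(-1)^{n-k}S_n^{(k)}\theta^k$, so that $\widehat S_{n,m}^\prime(\theta)=(\theta)_n^{-1}P_{n,m}^\prime(\theta)$. Differentiating the product and using the logarithmic derivative of the Pochhammer symbol,
\[
\frac{d}{d\theta}\ln(\theta)_n=\frac{d}{d\theta}\sum_{k=0}^{n-1}\ln(\theta+k)=\sum_{k=0}^{n-1}\frac{1}{\theta+k},
\]
gives
\[
\frac{d}{d\theta}S_{n,m}^\prime(\theta)
=-\frac{P_{n,m}(\theta)}{(\theta)_n}\sum_{k=0}^{n-1}\frac{1}{\theta+k}+\frac{P_{n,m}^\prime(\theta)}{(\theta)_n}
=-S_{n,m}^\prime(\theta)\sum_{k=0}^{n-1}\frac{1}{k+\theta}+\widehat S_{n,m}^\prime(\theta),
\]
which is exactly \eqref{eq:inv05}; the identification of $P_{n,m}^\prime(\theta)/(\theta)_n$ with $\widehat S_{n,m}^\prime(\theta)$ is just termwise differentiation $\tfrac{d}{d\theta}\theta^k=k\theta^{k-1}$ matched against \eqref{eq:inv06}.

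For \eqref{eq:inv07} I would mimic the proof of Theorem~\ref{thm:stel01}. Starting from \eqref{eq:inv06} with $n$ replaced by $n+1$, substitute the Stirling recurrence \eqref{eq:details03}, $S_{n+1}^{(k)}=S_n^{(k-1)}-nS_n^{(k)}$, and split the sum into the two resulting pieces; shift the index in the $S_n^{(k-1)}$ piece so that both sums run over the same range and can be recombined. Using $(\theta)_{n+1}=(\theta+n)(\theta)_n$ to clear denominators, one of the two pieces reproduces $n\widehat S_{n,m}^\prime(\theta)$ and the other produces $\theta\widehat S_{n,m-1}^\prime(\theta)$ together with an extra term coming from the factor $k$ versus $k-1$ in \eqref{eq:inv06}; that extra term is precisely $S_{n,m-1}^\prime(\theta)$, the inhomogeneity that distinguishes \eqref{eq:inv07} from the homogeneous recursion \eqref{eq:details04}. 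Care must be taken with the endpoint $k=m$ of the sum, exactly as in Theorem~\ref{thm:stel01}; since $S_{n+1}^{(n+1)}=1$ and the boundary contributions from the index shift cancel against the upper-limit terms, no correction survives for $1\le m\le n$.

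The only genuinely delicate point is bookkeeping of the index shift and the endpoint terms when separating the $k$-weighted sum — tracking where the ``extra'' $S_{n,m-1}^\prime(\theta)$ term originates and verifying that the boundary terms at $k=m$ and $k=n+1$ cancel. Everything else is routine algebra of finite sums, identical in structure to the already-proved Theorem~\ref{thm:stel01}, so I expect the write-up to be short.
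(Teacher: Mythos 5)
Your proposal is correct and follows essentially the same route as the paper: the derivative formula \eqref{eq:inv05} comes from the logarithmic derivative of $(\theta)_n$ (you differentiate the finite product directly, while the paper phrases the same computation via $\psi(\theta)-\psi(\theta+n)$ and the recursion \eqref{eq:inv09}), and the recurrence \eqref{eq:inv07} is obtained, exactly as the paper indicates, by inserting the Stirling recurrence \eqref{eq:details03} and shifting the index as in the proof of Theorem~\ref{thm:stel01}, with the split $k=(j+1)=j+1$ producing the extra $S^\prime_{n,m-1}(\theta)$ term. No gaps.
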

\begin{proof}
First we have
\begin{equation}\label{eq:inv08}
\frac{d}{d\theta}\frac {1}{(\theta)_n}=\frac{d}{d\theta}\frac {\Gamma(\theta)}{\Gamma(\theta+n)}=
\frac {\Gamma^\prime(\theta)}{\Gamma(\theta+n)}-\frac {\Gamma(\theta)\Gamma^\prime(\theta+n)}{\Gamma^2(\theta+n)}.
\end{equation}
Next we use the $\psi$-function, defied by $\psi(z)=\Gamma^\prime(z)/\Gamma(z)$, which has the recursive property (which is also used in \eqref{eq:summ12})
\begin{equation}\label{eq:inv09}
\psi(z+n)=\psi(z)+\sum_{k=0}^{n-1}\frac{1}{k+z},\quad n=1,2,3,\ldots.
\end{equation}
This recursion easily follows from the fundamental property of the gamma function $\Gamma(z+1)=z\Gamma(z)$. 
We find
\begin{equation}\label{eq:inv10}
\frac{d}{d\theta}\frac {1}{(\theta)_n}=\left(\psi(\theta)-\psi(\theta+n)\right)\frac {1}{(\theta)_n}.
\end{equation}
Combining these results we find the relation in \eqref{eq:inv05}. The proof of the recurrence relation in \eqref{eq:inv07} follows from the recurrence relation of the Stirling numbers in \eqref{eq:details03},  just as in the proof of Theorem~\ref{thm:stel01}.
\end{proof}

A few first values of $\widehat{S}^\prime_{n,m}(\theta)$ are
\begin{equation}\label{eq:inv11}
\begin{array}{r@{\,}c@{\,}l}
\widehat{S}^\prime_{0,0}(\theta)&=&0,\quad \widehat{S}^\prime_{1,0}=\dsp{\frac{1}{\theta}},\quad \widehat{S}^\prime_{1,1}=\dsp{\frac{1}{\theta}},\\[8pt]
\widehat{S}^\prime_{2,0}(\theta)&=&\dsp{\frac{2\theta+1}{\theta(\theta+1)}},\quad \widehat{S}^\prime_{2,1}=\dsp{\frac{2\theta+1}{\theta(\theta+1)}},\quad \dsp{\widehat{S}^\prime_{2,2}=\frac{2}{\theta+1}.}
\end{array}
\end{equation}

In \S\ref{sec:numres} we give numerical examples of the Newton iteration scheme.

\subsection{The asymptotic  inversion method}\label{sec:invmeth2}

We consider the inversion of the full equation \eqref{eq:inv01} with the  representation  of $S_{n+1,m+1}^\prime(\theta)$ as given in  \eqref{eq:summ03}. We concentrate on finding  $\tau$ and with this information we compute $\theta$; see Definition~\ref{def:def01}. We propose the following 

\begin{proposition}\label{prop:prop01}
Let $x$ be the solution of the reduced equation in \eqref{eq:inv02}, with corresponding $\tau$ value $\tau_0=x/(1-x)$. Then we will construct an expansion of the wanted value $\tau$ of the form
\begin{equation}\label{eq:inv12}
\tau=\tau_0+\eps,\quad \eps\sim \frac{\tau_1}{\nu}+\frac{\tau_2}{\nu^2}+\ldots, \quad \nu=n-m,
\end{equation}
with 
\begin{equation}\label{eq:inv13}
\tau_1 =\frac{\tau_0(\tau_0+1)}{\tau_0-t_0}\ln\bigl((t_0-\tau_0)f(t_0)\bigr),
\end{equation}
where  $f(t_0)$ is given in \eqref{eq:summ18}, and 
\begin{equation}\label{eq:inv14}
\begin{array}{r@{\,}c@{\,}l}
e^{-\xi}\tau_2&=&\dsp{\tau_1\frac{e^{-\xi}-1}{\xi}+
\frac{(2\tau_0+1)\tau_1^2}{\tau_0(\tau_0+1)}\frac{e^{-\xi}-1+\xi}{\xi^2}\ +}\\[8pt]
&&\dsp{\rho^\prime(\tau_0)\tau_1^3\frac{e^{-\xi}-1+\xi-\frac12\xi^2}{\xi^3}\ -}\\[8pt]
&&
\dsp{\tau_0(\tau_0+1)\left(G_1(t_0)+\tau_1G_0^\prime(t_0) +\tfrac{1}{2}\rho^\prime(\tau_0)\tau_1^2 G_0(t_0)\right).}
\end{array}
\end{equation}
The coefficients $G_k(t)$ are defined in Theorem~\ref{thm:stel06} (see also \S\ref{sec:deriv}) and
\begin{equation}\label{eq:inv15}
\rho(\tau)=\frac{t_0-\tau}{\tau(1+\tau)},\quad \xi=\tau_1\rho(\tau_0).
\end{equation}
\end{proposition}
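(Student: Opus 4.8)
The plan is to construct the expansion in \eqref{eq:inv12} by substituting $\tau=\tau_0+\eps$ into the exact equation $S_{n+1,m+1}^\prime(\theta)=s$ written in the form \eqref{eq:summ03}, and then matching powers of $\nu^{-1}$. First I would recall that by the definition of $\tau_0$ we have $I_{\tau_0/(1+\tau_0)}(m,n-m+1)=s$, so the full equation becomes
\begin{equation*}
I_{\tau/(1+\tau)}(m,n-m+1)-I_{\tau_0/(1+\tau_0)}(m,n-m+1)=-R_{n+1,m+1}^\prime(\theta).
\end{equation*}
The left side I would expand using the integral representation \eqref{eq:summ19} together with the fact that the integrand, apart from the pole, carries the large factor $(t+1)^n/t^m=e^{\chi(t)}$; a Laplace/saddle-point analysis at $t_0$ shows the dominant contribution behaves like $e^{-\chi(\tau)}\binom{n}{m-1}/(t_0-\tau)$ to leading order, and more precisely one gets a full asymptotic series in $\nu^{-1}$ whose coefficients are the $G_k(t)$ of Theorem~\ref{thm:stel06}. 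The right side is governed by Theorem~\ref{thm:stel04}, in particular the first-order form \eqref{eq:summ17}--\eqref{eq:summ18}. Equating the two and taking a logarithm is what produces the $\ln\bigl((t_0-\tau_0)f(t_0)\bigr)$ in \eqref{eq:inv13}.

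The key technical device is the change of the unknown from $\tau$ to $\xi$-type quantities: one writes $\chi(\tau)-\chi(\tau_0)$ in closed form and notes that $\chi'(t)=(n-m)\rho(t)^{-1}\cdot(\text{something})$—more precisely from \eqref{eq:summ14}, $\chi'(t)=(n-m)(t-t_0)/(t(1+t))$, so $-\chi'(\tau)/(n-m)$ is essentially $-\rho(\tau)$ up to sign, and $e^{-\chi(\tau)+\chi(\tau_0)}$ expands through $\eps\chi'(\tau_0)/1+\tfrac12\eps^2\chi''(\tau_0)+\cdots$. After dividing by the common factor $e^{-\chi(\tau_0)}\binom{n}{m-1}$ and using $\binom{n}{m}/\binom{n}{m-1}=(n-m+1)/m$, I would organize the resulting identity as an equation of the shape
\begin{equation*}
e^{-\nu\rho(\tau_0)\eps+\cdots}\bigl(1+\cdots\bigr)=(t_0-\tau_0)f(t_0)\bigl(1+\cdots\bigr),
\end{equation*}
take logarithms, and solve iteratively. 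Setting $\eps=\tau_1/\nu+\tau_2/\nu^2+\cdots$ and collecting the $\nu^{-1}$ terms yields \eqref{eq:inv13}; collecting the $\nu^{-2}$ terms yields \eqref{eq:inv14}, where the combinations $(e^{-\xi}-1)/\xi$, $(e^{-\xi}-1+\xi)/\xi^2$, $(e^{-\xi}-1+\xi-\tfrac12\xi^2)/\xi^3$ arise exactly from expanding $e^{-\nu\rho(\tau_0+\eps)\eps}$ to the relevant order in $\eps$ and re-expressing the Taylor remainder of the exponential in terms of $\xi=\tau_1\rho(\tau_0)$. The terms involving $G_1(t_0)$, $G_0'(t_0)$, $G_0(t_0)$ come from the second-order correction to the incomplete-beta integral and to $R_{n+1,m+1}^\prime$, both of which are captured by the $G_k$.

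The main obstacle will be bookkeeping the second-order contributions consistently: both the incomplete beta function and the remainder $R_{n+1,m+1}^\prime(\theta)$ must be expanded to one more order than in Theorem~\ref{thm:stel04}, and the expansions must be carried out in the \emph{same} variable (the $t$-variable with the pole moved to $\tau$, then $\tau$ expanded about $\tau_0$), so that the $\binom{n}{m-1}e^{-\chi(\tau)}$ prefactors cancel cleanly and only the ratios $G_k(t_0)$ survive. A further subtlety is that $\xi=\tau_1\rho(\tau_0)$ is treated as an $\bigO(1)$ quantity (it is $\nu$-independent), so the apparent exponential $e^{-\nu\rho(\tau_0)\eps}$ contributes an $\bigO(1)$ factor $e^{-\xi}$ at leading order rather than an exponentially small one—this is why $e^{-\xi}$ multiplies $\tau_2$ on the left of \eqref{eq:inv14}. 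Once the algebra is arranged so that the $\nu^{-1}$ balance is an identity (which it is, by the definition of $\tau_1$), the $\nu^{-2}$ balance is a linear equation for $\tau_2$, and reading off its solution gives precisely \eqref{eq:inv14}. I would verify the final formulas against the Newton-iteration values of \S\ref{sec:numres} as a consistency check, and note that the procedure extends in principle to all orders, with $\tau_k$ determined recursively from lower-order data and the coefficients $G_0,\ldots,G_{k-1}$.
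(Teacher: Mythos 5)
Your proposal is correct and follows essentially the paper's own route: the authors likewise write the equation in the form \eqref{eq:inv16} (equivalently \eqref{eq:inv18}), substitute $\eps\sim\tau_1/\nu+\tau_2/\nu^2+\cdots$, treat $\xi=\tau_1\rho(\tau_0)$ as an $\bigO(1)$ quantity so that the $\nu^{-1}$ balance reduces to $e^{-\xi}=(t_0-\tau_0)f(t_0)$ (whence the logarithm in \eqref{eq:inv13}) and the $\nu^{-2}$ balance is linear in $\tau_2$, the remaining bookkeeping being done symbolically. The only slip is attributing a $G_k$-series to the incomplete-beta side: in the paper the beta part is expanded through its exact $\tau$-derivatives in \eqref{eq:inv18}, while the coefficients $G_k(t_0)$ enter solely through the remainder function $S(\tau)$ of \eqref{eq:inv17}; this does not affect your overall scheme.
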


To obtain these coefficients $\tau_j$ we have used a perturbation method that starts with  writing $S_{n+1,m+1}^\prime(\theta)$ of Theorem~\ref{thm:stel03} in the form 
\begin{equation}\label{eq:inv16}
I_{\frac{\tau_0+\eps}{1+\tau_0+\eps}}(p,q)+e^{-\chi(\tau_0+\eps)}\binom{n}{m-1}S(\tau_0+\eps)=s,
\end{equation}
where $S(\tau)$ is the function with expansion (see  \eqref{eq:deriv01} and \eqref{eq:summ03})
\begin{equation}\label{eq:inv17}
S(\tau)\sim\sum_{k=0}^\infty \frac{G_k(t_0)}{\nu^k}.
\end{equation}

The idea is to use  the expansion of $\eps$ given in \eqref{eq:inv12}  in \eqref{eq:inv16}, and expand the relevant terms in  negative powers of $\nu$. The coefficients of the same powers of $\nu$ in this collection should vanish. This yields  equations for the coefficients $\tau_j$.
Because we already calculated $\tau_0$ such  that $I_{\frac{\tau_0}{1+\tau_0}}(p,q)=s$, we have the asymptotic equality
\begin{equation}\label{eq:inv18}
\sum_{k=1}^\infty\frac{\eps^k}{k!}\frac{d^k}{d\tau^k}I_{\frac{\tau}{1+\tau}}(p,q)+\binom{n}{m-1}\sum_{k=0}^\infty\frac{\eps^k}{k!}\frac{d^k}{d\tau^k}\left(e^{-\chi(\tau)}S(\tau)\right)=0,
\end{equation}
where the derivatives are evaluated at $\tau=\tau_0$. 
The construction of the coefficients $\tau_j$ can be done with the help of symbolic calculations. The technical details of the manipulations to find \eqref{eq:inv13} and  \eqref{eq:inv14} are available from the authors.

\begin{example}\label{ex:ex02}
We take $n=99$, $m=49$ and try to find the value $\theta$ such that $S_{100,50}^\prime(\theta)=\frac12$.
This means, we try to find the transition value $\theta_t$ for these $m$ and $n$; see Definition~\ref{def:def01}.
This example corresponds with the second line in Table~\ref{tab:tab02}.  For the asymptotic method we have the following steps.
\begin{enumerate}
\item
Compute the saddle point $z_0\doteq39.1327$ by solving the equation $\phi^\prime(z)=0$, see \eqref{eq:summ12}, and  $t_0=49/50=0.98$ from $\chi^\prime(t)=0$,  see \eqref{eq:summ14}.
\item
Compute $\phi(z_0)\doteq259.198$ and $\chi(t_0)\doteq68.6165$.
\item
With $s=\frac12$ solve the equation $I_x(p,q)=\frac12$, see \eqref{eq:inv02}. We find  
 $x\doteq0.4899330675$.
This gives $\tau_0=x/(1-x)\doteq0.960527$ and $\chi(\tau_0)\doteq68.6215$.
\item
Use \eqref{eq:summ13} to compute $\theta$ from the equation $\phi(\theta)=\phi(z_0)+\chi(\tau_0)-\chi(t_0)\doteq259.203$  in the interval $0,z_0)$ (because $\tau_0<t_0$, and  find
$\theta \doteq 38.29722$. 
\item
A first check: compute $S_{100,50}^\prime(\theta)$ with this value of $\theta$ and  find $0.50233$, with relative error $0.0047$.
\item
Next compute $\tau_1$ from \eqref{eq:inv13}, with the just found value of $\theta$ that is needed in $f_0=f(t_0)$ given in  \eqref{eq:summ18}. Find $\tau_1\doteq-0.055873923$ and compute $\tau\sim \tau_0+\tau_1/\nu\doteq0.959409535$, with $\nu=n-m$.

\item
Repeat the steps given above: the equation for the new $\theta$ becomes $\phi(\theta)=\phi(z_0)+\chi(\tau_0+\tau_1/\nu)-\chi(t_0)\doteq259.203352$ and find $\theta \doteq  38.2492993$.
\item
Check: compute $S_{100,50}^\prime(\theta)$ with this $\theta$ and  find $0.5000190$, with relative error 0.38$e-$4.
\item
With   the next term in the expansion,  $\tau\sim \tau_0+\tau_1/\nu+\tau_2/\nu^2$,  we find $\theta\doteq38.248908191$, and $S_{100,50}^\prime(\theta)\doteq0.500000125$, with relative error 0.25$e-$6.
\end{enumerate}
\end{example}

\renewcommand{\arraystretch}{1.2}
\begin{table}
\caption{\small
Results for inverting  equation $S_{n,m}^\prime(\theta)=s$ by using Newton iteration with starting value  $\theta=\theta_0$ and  relative errors $\delta_j=\vert s/S_{n,m}^\prime(\theta_j)-1\vert$, $j=0,2,4$.
\label{tab:tab01}}
$$
\begin{array}{rccccccc}
m/n\quad & s &\theta_0&\delta _0& \theta_2&\delta_2 & \theta_4 & \delta_4\\
\hline
10/25        \quad &  0.0001 &  0.02 & 0.82 & 0.812 & 0.20 e$-$00 & 0.78467 & 0.17 e$-$03\\
10/25        \quad &  0.25 &       4.55 & 0.36 & 3.786 & 0.46 e$-$07 & 3.78618 & 0.00e$-$00\\
10/25        \quad &  0.50 &      6.13 & 0.23 & 5.163 & 0.46 e$-$03 & 5.16527 & 0.10 e$-$14\\
10/25        \quad &  0.75 &      8.21 & 0.12 & 6.970 & 0.26 e$-$02 & 6.98945 & 0.98 e$-$10\\
25/50        \quad &  0.0001 &    6.20 & 0.64 & 5.70 & 0.46 e$-$01 & 5.67813 & 0.40 e$-$06\\
25/50        \quad &  0.25 &   16.06 & 0.24 & 14.941 & 0.91 e$-$05 & 14.9416 & 0.10 e$-$14\\
25/50        \quad &  0.50 &    19.70 & 0.15 & 18.373 & 0.18 e$-$04 & 18.3727 & 0.14e$-$14\\
25/50        \quad &  0.75 &    24.14 & 0.080  & 22.563 & 0.19 e$-$03 & 22.5663 & 0.10 e$-$14\\
\hline
\end{array}
$$
\end{table}
\renewcommand{\arraystretch}{1.0}

\subsection{Numerical results for the inversion}\label{sec:numres}

In Table~\ref{tab:tab01} we give the results for computing $\theta$ from the equation $S_{n,m}^\prime(\theta)=s$ by using Newton iteration (see \S\ref{sec:invmeth1}).  The starting value $\theta_0$ is obtained  by inverting the reduced equation in \eqref{eq:inv02}, where also the relation between $x$ and $\theta$ is explained. In the table we give the iterated values of  $\theta_j$ and $\delta_j=\vert s/S_{n,m}^\prime(\theta_j)-1\vert$ for $j=0,2,4$.
As can be expected by using Newton iteration, once we have a reasonable starting value, we can obtain excellent accuracy with a few iteration steps. We even see convergence for small values $s=0.0001$, where for the corresponding $\theta$ values the curve of 
 $S_{n,m}^\prime(\theta)$ is very flat (see Figure~\ref{fig:fig01}, {\bf Right}).

In Table~\ref{tab:tab02} we give the results for computing $\theta$ by using asymptotic methods described in \S\ref{sec:invmeth2}.  The equation $S_{n+1,m+1}^\prime(\theta)=s$ is approximately solved by using the approximation of $\tau$  in   \eqref{eq:inv12} with terms up to $\tau_j$, $j=0,1,2$.  As expected, we see  a better performance when we include  $\tau_1/\nu$ and $\tau_1/\nu+\tau_2/\nu^2$.

\renewcommand{\arraystretch}{1.2}
\begin{table}
\caption{\small
Results for computing $\theta$ from the equation $S_{nm}^\prime(\theta)=s$ by using the approximation of $\tau$  in   \eqref{eq:inv12} with terms up to $\tau_j$, $j=0,1,2$; see \eqref{eq:inv13}  and \eqref{eq:inv14}. We give the corresponding values $\theta_j$ and  relative errors $\delta_j=\vert s/S_{nm}^\prime(\theta)-1\vert$.
\label{tab:tab02}}
$$
\begin{array}{rccccccc}
m/n\quad & s &\theta_0&\delta _0& \theta_1&\delta_1 & \theta_2 & \delta_2\\
\hline
200/250      \quad &  0.0001 &    255.3 &  0.36e$-$2 &  255.339 &  0.24e$-$4 &  255.33835 &  0.13e$-$4\\
200/250      \quad &  0.25 &    408.2 &  0.11e$-$2 &  408.103 &  0.35e$-$5 &  408.10264 &  0.66e$-$7\\
200/250      \quad &  0.50 &    455.0 &  0.66e$-$3 &  454.911&  0.21e$-$5 &  454.91098 &  0.18e$-$6\\
200/250      \quad &  0.75 &    508.2 &  0.34e$-$3 &  508.124 &  0.11e$-$5 &  508.12328 &    0.76e$-$8\\
500/1000        \quad &  0.0001 &    307.4 &  0.73e$-$2 &  307.383 &  0.58e$-$5 &  307.38266 &  0.32e$-$8\\
500/1000        \quad &  0.25 &       378.6 &  0.23e$-$2 &  378.570 &  0.19e$-$5 &  378.56980 &  0.10e$-$8\\
500/1000        \quad &  0.50 &       396.4 &  0.15e$-$2 &  396.387 &  0.11e$-$5 &  396.39298 &  0.20e$-$3\\\
500/1000        \quad &  0.75 &      415.1 &  0.77e$-$3 &  415.025 &  0.62e$-$6 &  415.02539 &  0.20e$-$9\\
\hline
\end{array}
$$
\end{table}
\renewcommand{\arraystretch}{1.0}

\section{Deriving the complete asymptotic expansion}\label{sec:deriv}
The first-term approximation in our previous result in \eqref{eq:summ16} of Theorem~\ref{thm:stel04} will now be extended in the following theorem.
\begin{theorem}\label{thm:stel06}  Let  $R_{n+1,m+1}^\prime(\theta)$ be defined as in \eqref{eq:summ16}. Then for $N=0,1,2,\ldots$
\begin{equation}\label{eq:deriv01}
R_{n+1,m+1}^\prime(\theta)=
e^{-\chi(\tau)}\binom{n}{m-1}\sum_{k=0}^{N-1}\frac{G_k(t_0)}{\nu^k}+\frac{e^{-\chi(\tau)}}{2\pi i\nu^N}\int_{{\cal C}_\sigma} \frac{(t+1)^n}{t^{m}}G_N(t)\,dt,
\end{equation}
where $\nu=n-m$,  $G_0(t)=g(t)$, see \eqref{eq:summ15}, and other  $G_k(t)$ follow from the recursive scheme
\begin{equation}\label{eq:deriv02}
H_k(t)=\frac{G_k(t)-G_k(t_0)}{t-t_0},\quad 
G_{k+1}(t)=-\frac{d}{dt}\Bigl(t(1+t)H_k(t)\Bigr),\quad k=0,1,2,\ldots\,.
\end{equation}
\end{theorem}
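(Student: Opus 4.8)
The plan is to establish \eqref{eq:deriv01} by induction on $N$, the inductive step consisting of a single integration by parts inside the loop integral \eqref{eq:summ16} — the standard device for generating Watson/Perron-type expansions of contour integrals carrying a large exponent. For a fixed pair $n,m$ (and hence a fixed $\tau$, $t_0=m/(n-m)$) the identity \eqref{eq:deriv01} is an exact complex-analytic statement, so no limiting argument is needed; the asymptotic reading comes afterward. For $N=0$ the asserted identity is nothing but \eqref{eq:summ16} rewritten with an empty sum and with $G_0=g$, as fixed in \eqref{eq:summ15} and in the statement, so the base case is immediate.

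For the step, suppose \eqref{eq:deriv01} holds for some $N\ge0$; it then suffices to rework the remainder integral
\[
\frac{e^{-\chi(\tau)}}{2\pi i\,\nu^N}\int_{\calC_\sigma}\frac{(t+1)^n}{t^m}\,G_N(t)\,dt .
\]
Split $G_N(t)=G_N(t_0)+(t-t_0)H_N(t)$ as in \eqref{eq:deriv02}. The constant $G_N(t_0)$ comes out of the integral, and since $(t+1)^n=\sum_{j}\binom{n}{j}t^{\,j}$ the residue at the origin gives $\tfrac{1}{2\pi i}\int_{\calC_\sigma}(t+1)^n t^{-m}\,dt=\binom{n}{m-1}$; this produces exactly the $k=N$ term $e^{-\chi(\tau)}\binom{n}{m-1}G_N(t_0)/\nu^N$ of the finite sum. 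For the remaining part I use the identity
\[
(t-t_0)\,\frac{(t+1)^n}{t^m}=\frac{t(1+t)}{\nu}\,\frac{d}{dt}\!\left(\frac{(t+1)^n}{t^m}\right),\qquad \nu=n-m,
\]
which follows at once from $e^{\chi(t)}=(t+1)^n t^{-m}$ together with $\chi'(t)=(n-m)\,\frac{t-t_0}{t(1+t)}$ recorded in \eqref{eq:summ14}. Inserting it and integrating by parts around the \emph{closed} contour $\calC_\sigma$ — the boundary contribution vanishes because $t(1+t)H_N(t)(t+1)^n t^{-m}$ is single-valued on $\calC_\sigma$, hence has zero period — converts the integral into
\[
-\frac{1}{\nu}\,\frac{1}{2\pi i}\int_{\calC_\sigma}\frac{(t+1)^n}{t^m}\,\frac{d}{dt}\bigl(t(1+t)H_N(t)\bigr)\,dt
=\frac{1}{\nu}\,\frac{1}{2\pi i}\int_{\calC_\sigma}\frac{(t+1)^n}{t^m}\,G_{N+1}(t)\,dt,
\]
the last equality being precisely the definition of $G_{N+1}$ in \eqref{eq:deriv02}. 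Restoring the prefactor $e^{-\chi(\tau)}/\nu^N$ and adding back the $k=N$ term yields \eqref{eq:deriv01} with $N$ replaced by $N+1$, closing the induction.

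The rest is bookkeeping, and that is the only place needing a word of care. One must check that every $G_k$ is analytic on the domain fixed in Theorem~\ref{thm:stel04}, so that a single contour $\calC_\sigma$ — enclosing the origin and lying in that common domain — serves all of them: $g=G_0$ is analytic there (the subtraction in \eqref{eq:summ15} removes the pole of $f$ at $\tau$, and dividing by $t-t_0$ in \eqref{eq:deriv02} introduces only a removable singularity), and $G_{k+1}=-\frac{d}{dt}\bigl(t(1+t)H_k\bigr)$ inherits analyticity on the same domain; moreover $\binom{n}{m-1}$, $e^{-\chi(\tau)}$ and $\nu$ are constants with respect to $t$ and factor out freely at each stage. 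I do not expect a genuine analytic obstacle here: the argument is essentially the repeated-integration-by-parts lemma for integrals of the form $\tfrac{1}{2\pi i}\oint(t+1)^n t^{-m}\varphi(t)\,dt$, and the ``main work'' is merely to verify that the recursion \eqref{eq:deriv02} is exactly what one step of this process produces. The asymptotic meaning — the remainder in \eqref{eq:deriv01} being of relative order $\nu^{-N}$ compared with $\binom{n}{m-1}e^{-\chi(\tau)}$ — then follows, if wanted, from the saddle-point estimate of the remaining loop integral already used for the first-order term \eqref{eq:summ17}.
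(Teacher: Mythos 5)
Your proposal is correct and follows essentially the same route as the paper: split $G_N(t)=G_N(t_0)+(t-t_0)H_N(t)$, pick up $\binom{n}{m-1}$ from the residue of $(t+1)^n t^{-m}$ at the origin, and use $(t-t_0)(t+1)^n t^{-m}=\tfrac{t(1+t)}{\nu}\,\tfrac{d}{dt}e^{\chi(t)}$ to integrate by parts on the closed contour, producing exactly the recursion \eqref{eq:deriv02}. The paper performs this single step explicitly and remarks that it can be repeated; your explicit induction and the check that each $G_k$ stays analytic on a common contour $\calC_\sigma$ merely formalize that remark.
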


\begin{proof}
We start with the representation in  \eqref{eq:summ16} and use an integration by parts procedure, which starts by writing  
\begin{equation}\label{eq:deriv03}
G_0(t)=G_0(t_0)+(t-t_0) H_0(t),\quad G_0(t)=g(t).
\end{equation}
This gives
\begin{equation}\label{eq:deriv04}
R_{n+1,m+1}^\prime(\theta)=
G_0(t_0)e^{-\chi(\tau)}\binom{n}{m-1}+\frac{e^{-\chi(\tau)}}{2\pi i}\int_{{\cal C}_\sigma} \frac{(t-t_0)H_0(t)}{\chi^\prime(t)}\,d  e^{\chi(t)},
\end{equation}
and using $\chi^\prime(t)$ shown  in \eqref{eq:summ14}, we find
\begin{equation}\label{eq:deriv05}
R_{n+1,m+1}^\prime(\theta)=
G_0(t_0)e^{-\chi(\tau)}\binom{n}{m-1}+\frac{e^{-\chi(\tau)}}{2\pi i\nu}\int_{{\cal C}_\sigma} \frac{(t+1)^n}{t^{m}}G_1(t)\,dt,
\end{equation}
where
\begin{equation}\label{eq:deriv06}
G_1(t)=-\frac{d}{dt}\bigl(t(1+t)H_0(t)\bigr).
\end{equation}
This integral has the same form as the one in \eqref{eq:summ16}, and we can continue this method. This proves the theorem.

\end{proof}

 The first coefficients  $G_k(t_0)$ of the expansion in \eqref{eq:deriv01} are
\begin{equation}\label{eq:deriv07}
\begin{array}{r@{\,}c@{\,}l}
G_0(t_0)&=& g_0,\quad G_1(t_0)= -(1+2t_0)g_1-t_0(t_0+1)g_2,\\[8pt]
G_2(t_0)&=& 2(1+2t_0)g_1+(2+11t_0+11t_0^2)g_2\ +\\[8pt]
&&5t_0(t_0+1)(1+2t_0)g_3+3t_0^2(t_0+1)^2g_4.
\end{array}
\end{equation}
The coefficients $g_k$ follow from the coefficients $f_k$ in the expansion 
$\dsp{f(t)=\sum_{k=0}^\infty f_k(t-t_0)^k}$,
with $f(t)$ defined in \eqref{eq:summ14}. We have
\begin{equation}\label{eq:deriv08}
g(t)=f(t)-\frac{1}{t-\tau}=\sum_{k=0}^\infty g_k(t-t_0)^k\quad g_k=f_k -\frac{(-1)^k}{(t_0-\tau)^{k+1}}.
\end{equation}

Finally, all these coefficients can be expressed in terms of the coefficients $z_k$  of the expansion
\begin{equation}\label{eq:deriv09}
z-z_0=\sum_{k=1}^\infty z_k (t-t_0)^k,
\end{equation}
and these follow from substituting this expansion in the Taylor expansions of the functions used in the transformation given in \eqref{eq:summ11}. This transformation has the local expansions at the saddle points
\begin{equation}\label{eq:deriv10}
(z-z_0)\sqrt{\sum_{k=2}^\infty \frac{1}{k!} \phi^{(k)}(z_0)(z-z_0)^{k-2}}=
(t-t_0)\sqrt{\sum_{k=2}^\infty \frac{1}{k!} \chi^{(k)}(t_0)(t-t_0)^{k-2}},
\end{equation}
where the square roots are positive for positive values of $z$ and $t$.
The derivatives $ \phi^{(k)}(z)$ can be expressed in terms of the derivatives of the gamma functions; see \eqref{eq:summ12}. The derivatives $\chi^{(k)}(t)$ at $t=t_0$ are simple expressions.

The first coefficients $z_k$ are
\begin{equation}\label{eq:deriv11}
z_1 =\sqrt{\frac{\chi^{(2)}(t_0)}{\phi^{(2)}(z_0)}}, \quad 
z_2= \frac{\chi^{(3)}(t_0)-z_1^3\phi^{(3)}(z_0)}{6z_1\phi^{(2)}(z_0)}.
\end{equation}

The first coefficients $f_k$ are
\begin{equation}\label{eq:deriv12}
\begin{array}{r@{\,}c@{\,}l}
f_0&=& \dsp{\frac{z_1}{z_0-\theta}},\quad f_1=\dsp{\frac{2z_2z_0-2z_2\theta-z_1^2}{(\theta-z_0)^2}}\,\\[8pt]
f_2&=& \dsp{\frac{6z_3z_0\theta+3z_1z_2z_0-3z_1z_2\theta-z_1^3-3z_3z_0^2-3z_3\theta^2}{(\theta-z_0)^3}}.
\end{array}
\end{equation}
Then, the first coefficients $g_k$ follow from \eqref{eq:deriv08}.

 \renewcommand{\arraystretch}{1.2}
\begin{table}
\caption{\small
Values of the relation \eqref{eq:deriv13} in the computation of  $S^\prime_{n,m}(\theta)$  for $n=1.000$ and $n=100.000$ for several values of $m$ and $\theta$.
\label{tab:tab03}}
$$
\begin{array}{ccccccc}
\hline
n=1.000 &{\rm Digits}=16&4 \ {\rm terms}&&&&\\
\hline
\rho \ \backslash \  m & 150 &300&450& 600&750 & 900\\
\hline
0.70 & 0.16e$-$12 & 0.27e$-$13 & 0.73e$-$13 & 0.10e$-$12 & 0.12e$-$12 & 0.47e$-$13\\
0.80 & 0.32e$-$12 & 0.13e$-$12 & 0.16e$-$12 & 0.86e$-$13 & 0.11e$-$12 & 0.88e$-$13\\
0.90 & 0.24e$-$11 & 0.30e$-$12 & 0.70e$-$13 & 0.60e$-$13 & 0.50e$-$12 & 0.13e$-$10\\
1.00 & 0.12e$-$11 & 0.19e$-$11 & 0.14e$-$12 & 0.54e$-$12 & 0.29e$-$11 & 0.79e$-$13\\
\hline
n=1.000 &{\rm Digits}=20&4 \ {\rm terms}&&&&\\
\hline
\rho \ \backslash \  m & 150 &300&450& 600&750 & 900\\
\hline
0.70 & 0.78e$-$15 & 0.58e$-$16 & 0.17e$-$17 & 0.49e$-$16 & 0.16e$-$16 & 0.24e$-$16\\
0.80 & 0.47e$-$15 & 0.48e$-$16 & 0.93e$-$17 & 0.16e$-$16 & 0.60e$-$18 & 0.20e$-$16\\
0.90 & 0.12e$-$15 & 0.36e$-$16 & 0.28e$-$16 & 0.60e$-$17 & 0.56e$-$16 & 0.20e$-$14\\
1.00 & 0.17e$-$15 & 0.14e$-$15 & 0.93e$-$16 & 0.37e$-$16 & 0.37e$-$16 & 0.95e$-$16\\
\hline
n=100.000 &{\rm Digits}=16&2 \ {\rm terms}&&&&\\
\hline
\rho \ \backslash \  m & 15.000 & 30.000& 45.000&  60.000& 75.000 &  90.000\\
\hline
0.97 &  0.53e$-$10 &  0.36e$-$10 &  0.86e$-$11 &  0.82e$-$11 &  0.24e$-$11 &  0.47e$-$11\\
0.98 &  0.58e$-$11 &  0.12e$-$10 &  0.61e$-$11 &  0.14e$-$10 &  0.16e$-$11 &  0.17e$-$10\\
0.99 &  0.86e$-$11 &  0.17e$-$10 &  0.18e$-$10 &  0.16e$-$10 &  0.20e$-$10 &  0.44e$-$11\\
1.00 &  0.34e$-$09 &  0.28e$-$08 &  0.47e$-$09 &  0.14e$-$08 &  0.46e$-$11 &  0.19e$-$08\\
\hline
n=100.000 &{\rm Digits}=20&2 \ {\rm terms}&&&&\\
\hline
\rho \ \backslash \  m & 15.000 & 30.000& 45.000&  60.000& 75.000 &  90.000\\
\hline
0.97 &  0.20e$-$14 &  0.21e$-$14 &  0.16e$-$14 &  0.16e$-$14 &  0.63e$-$15 &  0.97e$-$15\\
0.98 &  0.37e$-$14 &  0.38e$-$14 &  0.15e$-$14 &  0.29e$-$16 &  0.80e$-$15 &  0.25e$-$15\\
0.99 &  0.85e$-$14 &  0.10e$-$14 &  0.18e$-$14 &  0.14e$-$14 &  0.16e$-$14 &  0.64e$-$15\\
1.00 &  0.71e$-$13 &  0.22e$-$12 &  0.23e$-$13 &  0.15e$-$13 &  0.18e$-$12 &  0.51e$-$13\\
\hline
\end{array}
$$
\end{table}
\renewcommand{\arraystretch}{1.0}

 \subsection{Numerical verifications of the asymptotic approximation}\label{sec:num}
A convenient tool for verifying the errors in numerical calculations is the recursion in Theorem~\ref{thm:stel01},  \eqref{eq:details04}. We can write this in the form
 \begin{equation}\label{eq:deriv13}
\frac{nS^\prime_{n,m}(\theta)+\theta S^\prime_{n,m-1}(\theta)}{(\theta+n)S^\prime_{n+1,m}(\theta)}-1=0.
\end{equation}
Especially for large values of $n$ (for example, $n=100.000$) used in the tests we avoid the exact evaluation of the sums in \eqref{eq:intro01}, and we accept that we do not verify a standard relative error.

In Table~\ref{tab:tab03}  we show the values of the relation \eqref{eq:deriv13} in the computation of  $S^\prime_{n,m}(\theta)$  for $n=1000$ and $n=100.000$ for several values of $m$. In the first two parts ($n=1.000$) of the table we used the expansion in \eqref{eq:deriv01} with terms up to and including $k=3$, and in the final two parts ($n=100.000$) we only used the terms with $G_0(t_0)$ and $G_1(t_0)$.  We have taken $\theta=\rho z_0$, for the shown values of $\rho$. In this way the values of $S^\prime_{n,m}(\theta)$ are not very small. For example, with $n=1000.000, \ m=75.000, \  z_0\doteq136312.21$, we have
 \begin{equation}\label{eq:deriv14}
\begin{array}{ll}
\rho=0.97 \quad \Longrightarrow \quad  S^\prime_{n,m}(\theta) \doteq 0.300778124649e$-$04,\\[8pt]
\rho=1.00  \quad \Longrightarrow \quad  S^\prime_{n,m}(\theta) \doteq 0.501722781430e$-$00.
\end{array}
\end{equation}

The computations are done with  Maple 2020, Digits=16. To show the effect of cancellation or rounding errors, we have used the same Maple codes with Digits=20. 

The first three coefficients $G_k(t_0)$ of the expansion in \eqref{eq:deriv01} are given in \eqref{eq:deriv07}. Each $G_k(t_0)$ is a linear combination of coefficients $g_k$, $k=1,2,\ldots, 2k$, and these are defined in \eqref{eq:deriv08}. When $\theta\sim\theta_t$, the transition value, $\vert t_0-\tau\vert$ is small, and in the limit $\tau \to t_0$ the coefficient $g_k$ is well defined, although  $f_k$ has a pole at  $t=t_0$ with corresponding $z$-value $\theta=z_0$ (see the definition of $f(t)$ in \eqref{eq:summ14}). In  $g_k$ these poles are cancelled. From an analytical point of view, everything runs fine, but the algorithm needs special attention. For this we use expansions of $g_k$ for small values of $\vert t_0-\tau\vert$ in the form
\begin{equation}\label{eq:deriv15}
g_k=\sum_{j=0}^\infty g_{j,k}(\tau-t_0)^j,\quad k=0,1,2,\ldots\,. 
\end{equation}

The largest errors in the first part and third part (both with Digits=16) in Table~\ref{tab:tab03} occur for $\rho=1$, that is, when $\theta=z_0$, hence $\theta\sim\theta_t$, the transition value. In this case $\tau\sim t_0$. This is related to  the  difficulty of computing the coefficients $G_k(t_0)$ near the transition point $\theta_t$, which is near $z_0$, as explained in Definition~\ref{def:def01} ad Remark~\ref{rem:remark01}. This is an interesting analytical issue, but for the population genetics application area it is less important.  In fact we accept some loss of accuracy near the transition point instead of using more complicated analytical expansions of the coefficients.

\section{Conclusions}
We have provided additional details for the asymptotic approximation of the cumulative distribution quantities $S^\prime_{n,m}(\theta)$ and $T^\prime_{n,m}(\theta)=1-S^\prime_{n,m}(\theta)$. As a completely new contribution, we have considered the inversion problem to compute  $\theta$, the solution of the equation $S^\prime_{n,m}(\theta)=s$, with given $s\in(0,1)$, $m$ and large $n$. A simple inversion method uses Newton iteration, the other one asymptotic approximations. For this we provided additional coefficients in the earlier given asymptotic expansion. We have shown that some loss of accuracy is localized near the transition values. We further observe that these estimation errors can be mitigated with additional terms of the expansion. These errors are largely in a regime where the distribution functions are near the changeover value $\frac12$,  which is an interesting domain from an analytical point of view, but where Fu's $F_s$ values are not important for genetic inferences.

\section*{Acknowledgments}
SLC acknowledges members of the Chen lab for fruitful discussions. NMT acknowledges the help of Edgardo Cheb-Terrab (Maplesoft) and CWI, Amsterdam for scientific support.\\
SLC was supported by the National Medical Research Council, Ministry of Health, Singapore (grant numbers NMRC/OFIRG/0009/2016 and\\ NMRC/CIRG/1467/2017).\\
NMT was supported by the Ministerio de Ciencia e Innovaci\'on, Spain, projects MTM2015-67142-P (MINECO/FEDER, UE) and \\ 
PGC2018-098279-B-I00 (MCIU/AEI/FEDER, UE).


\end{document}